\newlist{todolist}{itemize}{2}
\setlist[todolist]{label=$\square$}
\newtheorem{theorem}{Theorem}[section]
\newtheorem{proposition}[theorem]{Proposition}
\newtheorem{conjecture}[theorem]{Conjecture}
\newtheorem{problem}[theorem]{Problem}
\newtheorem{corollary}[theorem]{Corollary}
\newtheorem{lemma}[theorem]{Lemma}
\theoremstyle{definition}
\newtheorem{definition}{Definition}[section]
\theoremstyle{definition}
\newcommand{\N}{\mathbb{N}}
\newcommand{\R}{\mathbb{R}}
\newcommand{\da}{\partial^\alpha}
\newcommand{\mb}{\bar{m}}
\title{On $C^m$ Solutions to Systems of Linear Inequalities}
\author{Garving K. Luli and Kevin O'Neill}
\begin{document}

\maketitle

\begin{abstract}
    Recent work of C. Fefferman and the first author \cite{LULI-3} has demonstrated that the linear system of equations
    \begin{equation*}
    \sum_{j=1}^M A_{ij}(x)F_j(x)=f_i(x)\hspace{.2in} (i=1,...,N),
\end{equation*}
has a $C^m$ solution $F=(F_1,...,F_M)$ if and only if $f_1,...,f_N$ satisfy a certain finite collection of partial differential equations. Here, the $A_{ij}$ are fixed semialgebraic functions.

In this paper, we consider the analogous problem for systems of linear \textit{inequalities}:
\begin{equation*}
    \sum_{j=1}^M A_{ij}(x)F_j(x)\le f_i(x)\hspace{.2in} (i=1,...,N).
\end{equation*}
Our main result is a negative one, demonstrated by counterexample: the existence of a $C^m$ solution $F$ may not, in general, be determined via an analogous finite set of partial differential inequalities in $f_1,...,f_N$.
\end{abstract}

\section{Introduction}

Fix $m,M,n,N\in\mathbb{N}$. Consider the system of linear equations given by
\begin{equation}\label{eq:system of linear equalities}
    \sum_{j=1}^M A_{ij}(x)F_j(x)=f_i(x)\hspace{.2in} (i=1,...,N),
\end{equation}
where the $A_{ij}$ and $f_i$ are given functions on $\mathbb{R}^n$, while $F_{1},\cdots ,F_{M}\in C^{m}\left( \mathbb{R}%
^{n}\right) $ are unknown functions to be solved for fixed $m$.\footnote{$C^{m}\left( \mathbb{R}^{n}\right) $
denotes the vector space of $m$-times continuously differentiable functions $%
\mathbb{R}^{n}$, with no growth conditions assumed at infinity.\ Similarly, $%
C^{m}\left( \mathbb{R}^{n},\mathbb{R}^{D}\right) $ denotes the space of all
such $\mathbb{R}^{D}$-valued functions on $\mathbb{R}^{n}$.} Notice that we do not impose any regularity conditions on $A_{ij}$ and $f_i$; in fact, they may be discontinuous functions, e.g., indicator functions on closed sets. While elementary linear algebra can be used to find the set of solutions $F_1(x),...,F_M(x)$ at any given $x\in\mathbb{R}^n$, analyzing the set of solutions which vary smoothly in $x$ (in particular, lie in $C^m$) is much more difficult, with most progress coming only recently \cite{Luli-JMSJ, FShv18, LULI-2, LULI-3, Feff-Kollar, Luli-IMRN}. 

We begin with a review of the literature on this subject before turning to the main object: $C^m$ solutions for systems of linear \emph{inequalities} (\ref{intro1}).

Regarding \eqref{eq:system of linear equalities},  the simplest question to be asked is the following:

\begin{problem}\label{prob 1 for equalities}{(Brenner-Epstein-Hochster-Koll\'ar Problem)}
Given $A_{ij}, f_i$ as in \eqref{eq:system of linear equalities}, determine if there exists a $C^m$ solution $F=(F_1,...,F_M)$.
\end{problem}

Problem \ref{prob 1 for equalities} was solved by Fefferman and the first author in \cite{FL14} (see also \cite{Feff-Kollar, Luli-JMSJ}), which motivated a number of related works \cite{Luli-JMSJ, bcm, Luli-IMRN, JIANG2022108566}.

Next, one may try to analyze the set of $f=(f_1,...,f_N)\in C^\infty$ for which there exists a $C^m$ solution $F$. For various reasons, it is helpful to consider particular cases of $A_{ij}$, namely \emph{semilagebraic functions}: a function $F: \mathbb{R}^n \rightarrow \mathbb{R}$ is \underline{semialgebraic} if its graph can be represented as the solution set to finitely many polynomial equations and/or inequalities. For instance, rational functions and the indicator function on the circle are semialgebraic while exponential functions are not. (See below for more discussion on the choice of semialgebraic functions for this problem.)

\begin{problem}\label{prob 2 for equalties}
Given semialgebraic $A_{ij}$ as in \eqref{eq:system of linear equalities}, characterize the set of $f\in C^\infty(\R^n,\R^N)$ for which there exists a $C^m$ solution $F$.
\end{problem}


To motivate the solution to Problem \ref{prob 2 for equalties}, let us review an example of Epstein and Hochster \cite{EH18}. Consider the single linear equation
\begin{equation}\label{eq:Epstein Hochster example}
    x^2F_1+y^2F_2+xyz^2F_3=f(x,y,z).
\end{equation}

There exist continuous $F_1,F_2,F_3$ satisfying \eqref{eq:Epstein Hochster example} if and only if
\begin{equation}
\left[ 
\begin{array}{l}
f\left( x,y,z\right) =\frac{\partial f}{\partial x}\left( x,y,z\right) =%
\frac{\partial f}{\partial y}\left( x,y,z\right) =0 \\ 
\text{and} \\ 
\frac{\partial ^{2}f}{\partial x\partial y}\left( x,y,z\right) =\frac{%
\partial ^{3}f}{\partial x\partial y\partial z}\left( x,y,z\right) =0%
\end{array}%
\right. 
\begin{array}{l}
\text{for }x=y=0,z\in \mathbb{R} \\ 
\\ 
\text{at }x=y=z=0\text{.}%
\end{array}
\label{intro3.3}
\end{equation}

Note that while no differentiability requirements on the $F_j$ are made, derivatives still show up in conditions on the $f_i$ in \eqref{intro3.3}. This example illustrates the general form of the solution to Problem \ref{prob 2 for equalties} as proven by Fefferman and the first author in \cite{LULI-3}:

\begin{theorem}
\label{theorem1} Fix $m\geq 0$, and let $\left( A_{ij}\left( x\right)
\right) _{1\leq i\leq N,1\leq j\leq M}$ be a matrix of semialgebraic
functions on $\mathbb{R}^{n}$. Then there exist linear partial differential
operators $L_{1},L_{2},\cdots ,L_{\nu_{\max}}$, for which the following hold.

\begin{itemize}
\item Each $L_{\nu }$ acts on vectors $f=\left( f_{1},\cdots ,f_{N}\right)
\in C^{\infty }\left( \mathbb{R}^{n},\mathbb{R}^{N}\right) $, and has the
form 
\begin{equation*}
L_{\nu }f\left( x\right) =\sum_{i=1}^{N}\sum_{\left\vert \alpha \right\vert
\leq \bar{m}}a_{\nu i\alpha }\left( x\right) \partial ^{\alpha }f_{i}\left(
x\right) ,
\end{equation*}%
where the coefficients $a_{\nu i\alpha }$ are semialgebraic. (Perhaps $\bar{m%
}>m$.)

\item Let $f=\left( f_{1},\cdots ,f_{N}\right) \in C^{\infty }\left( \mathbb{%
R}^{n},\mathbb{R}^{N}\right) $. Then the system \eqref{eq:system of linear equalities} admits a $C^{m}\left( \mathbb{R}^{n},%
\mathbb{R}^{M}\right)$
solution $F=\left( F_{1},\cdots ,F_{M}\right)$ if and only if $L_{\nu }f=0$ on $\mathbb{R}^{n}$ for
each $\nu =1,\cdots ,\nu_{\max}$.
\end{itemize}
\end{theorem}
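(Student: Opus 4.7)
The plan is to reduce the existence of a $C^m$ solution $F$ to a pointwise condition on the jet of $f$, show that this condition defines a semialgebraic linear subbundle in a suitable jet space, and then extract from it a finite list of semialgebraic linear differential operators whose joint kernel is exactly that subbundle.

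First, at each $x\in\R^n$ I would introduce a bundle $H(x)$ of candidate polynomial jets $P=(P_1,\ldots,P_M)$ for the unknown $F$: the fiber consists of those $P$ for which the formal jet of $\sum_j A_{ij}P_j$ at $x$ matches the jet of $f_i$ up to some high order $\bar m\geq m$ determined by the geometry of the $A_{ij}$. Since the $A_{ij}$ are semialgebraic, each fiber is a semialgebraic affine subspace that depends linearly on the jet of $f$. Following the strategy used in \cite{FL14, LULI-3}, I would then iterate a Glaeser-type refinement $H\mapsto H^{(1)}\mapsto H^{(2)}\mapsto\cdots$, at each stage retaining only those jets compatible with jets at nearby points. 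Tarski--Seidenberg preserves semialgebraicity at every stage, and the standard dimension-drop argument for semialgebraic families shows that the process stabilizes uniformly after finitely many steps to a semialgebraic bundle $H^{*}(x)$.

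Next, I would invoke the $C^m$ selection (finiteness) theorem used in \cite{FL14} to settle the Brenner--Epstein--Hochster--Koll\'ar problem, which converts pointwise nonemptiness of $H^{*}(x)$ into the existence of a genuine $C^m$ solution $F$. Projecting $H^{*}(x)$ onto its $f$-jet coordinates yields, at each $x$, a semialgebraic linear subspace $V^{*}(x)$ of the fixed finite-dimensional space of $\bar m$-jets of $f$; the system \eqref{eq:system of linear equalities} then admits a $C^m$ solution if and only if $J^{\bar m}_x f\in V^{*}(x)$ for every $x\in\R^n$.

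The final step is to recast the condition $J^{\bar m}_x f\in V^{*}(x)$ as the vanishing of finitely many linear differential operators with semialgebraic coefficients. Using a semialgebraic cell decomposition of $\R^n$ on which the rank of $V^{*}(x)$ is constant, one can locally find linear functionals cutting out $V^{*}(x)$ whose coefficients are semialgebraic in $x$; gluing across cells, possibly after multiplying by semialgebraic cutoffs that absorb blow-ups along stratum boundaries, produces globally defined semialgebraic coefficient functions $a_{\nu i\alpha}(x)$. Setting $L_\nu f(x)=\sum_{i,\alpha}a_{\nu i\alpha}(x)\partial^\alpha f_i(x)$ gives the desired finite list. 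The main obstacle is exactly this extraction step: within each cell the description is immediate, but producing globally defined semialgebraic coefficients requires careful bookkeeping across the semialgebraic stratification. A related and subtler difficulty is ensuring that the Glaeser iteration stabilizes in a number of steps bounded independently of $f$ --- this is precisely where the semialgebraic hypothesis on the $A_{ij}$ is essential, since without it one could not even guarantee that a finite family of $L_\nu$'s exists.
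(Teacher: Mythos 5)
The paper does not prove Theorem \ref{theorem1}; it states the result as a citation from \cite{LULI-3} and uses it purely as motivation for Conjecture \ref{conj:main conj}, so there is no in-paper proof to compare your sketch against.

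Taken on its own terms, your sketch captures the broad outline of the Fefferman--Luli Glaeser-refinement machinery, but the description of the initial bundle contains a genuine error that would derail a careful write-up. You define the fiber $H(x)$ by requiring that ``the formal jet of $\sum_j A_{ij}P_j$ at $x$ matches the jet of $f_i$ up to order $\bar m$.'' The hypotheses allow $A_{ij}$ to be an arbitrary semialgebraic function --- in particular discontinuous, e.g.\ an indicator function, as the paper explicitly emphasizes --- so ``the formal jet of $\sum_j A_{ij}P_j$'' is not a meaningful object. The correct starting bundle imposes only the pointwise linear constraint $\sum_j A_{ij}(x)\,P_j(x)=f_i(x)$ on the $m$-jet $P$ of the unknown $F$; the higher-order derivatives of $f$ (and the reason $\bar m$ can exceed $m$) arise from the Glaeser refinement process itself, not from the definition of $H(x)$. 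Relatedly, ``projecting $H^*(x)$ onto its $f$-jet coordinates'' mischaracterizes the final step: $H^*(x)$ sits inside the jet space of the \emph{unknown} $F$, and what one actually uses is that the (affine) fiber depends affinely on the jet of $f$, so that \emph{nonemptiness} of $H^*(x)$ is a linear condition on $J^{\bar m}_x f$ --- there is no projection involved. You are right that the genuinely hard parts are the uniform stabilization of the refinement (which relies essentially on semialgebraicity) and the extraction of globally defined semialgebraic coefficients $a_{\nu i\alpha}$ across a cell decomposition; one small note there is that ``semialgebraic cutoffs'' need not be smooth, which is fine because the $L_\nu$ are only required to have semialgebraic coefficients, so gluing with indicator functions of semialgebraic cells is legitimate.
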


We now turn to the case of inequalities:
\begin{equation}
\sum_{j=1}^{M}A_{ij}\left( x\right) F_{j}\left( x\right) \leq f_{i}\left(
x\right) \text{ }\left( i=1,\cdots ,N\right) \text{ on } \mathbb{R}^n.  \label{intro1}
\end{equation}%

\begin{problem}\label{prob 1 for INequalities}
    Given $A_{ij}, f_i$ as in \eqref{intro1}, determine if there exists a $C^m$ solution $F=(F_1,...,F_M)$.
\end{problem}

Problem \ref{prob 1 for INequalities} is the analogue of Problem \ref{prob 1 for equalities} for inequalities and was solved recently by Jiang and the authors in \cite{JIANG2022108566}.

The focus of this paper is the following analogue of Problem \ref{prob 2 for equalties} for inequalities.

\begin{problem}\label{prob 2 for INequalities}
Fix $m,M,n,N\in\N$ and let $A_{ij}:\R^n\to\R$ be semialgebraic. Characterize the set of $f=(f_1,...,f_N)\in C^\infty(\R^n,\R^N)$ for which there exists a $C^m$ solution $F=(F_1,...,F_M)$ to \eqref{intro1}.
\end{problem}

It is well-known that, for fixed $x$, any linear, convex constraints may be put into the form \eqref{intro1}. For our phrasing of Problem \ref{prob 2 for INequalities}, such equivalence does not hold. For instance, in a linear programming problem, the equality constraint $a\cdot x= b$ is equivalent to requiring both inequalities $a\cdot x\le b$ and $(-1)\cdot x\le -b$. However, in the context of Problem \ref{prob 2 for INequalities}, replacing one constraint with two constraints leads to the presence of an additional $f_i$ and a different problem. A much more general version of Problem \ref{prob 2 for INequalities} could be stated, but this would be unnecessary for the purposes of providing a counterexample.

The recent solution \cite{JIANG2022108566} to Problem \ref{prob 1 for equalities} for \eqref{intro1} by Jiang and the current authors provides a key step to analyze Problem \ref{prob 2 for INequalities}. Much like the solution to Problem \ref{prob 1 for equalities} in \cite{FL14}, it solved Problem \ref{prob 2 for INequalities} in terms of the ``Glaeser refinement
technique", which is a higher-dimensional generalization of the divided difference \cite{G58,BMP03,Fefferman2006Solution}.
This work \cite{JIANG2022108566} provides a solution to Problem \ref{prob 2 for INequalities} {\em in principle}, but in practice it is difficult to verify the conditions.

To motivate our expected result on a system of inequalities, let us consider an example. For simplicity, we temporarily ignore the previous discussion and consider systems more general than those described by \eqref{intro1}. Suppose $f \in C^\infty (\mathbb{R})$ and consider the following inequalities
for $x \in \mathbb{R}$, 
\begin{equation}
\begin{cases}
x^2\mathbb{I}_{x \geq 0} F \leq f \leq x \mathbb{I}_{x \geq 0}F \\
x\mathbb{I}_{x \leq 0} F \leq f \leq x^2 \mathbb{I}_{x \leq 0}F \label{intro3a}
\end{cases}\end{equation}
for unknown continuous $%
F$ on $\mathbb{R}$. One
checks that a continuous solution $F$ exists if and only if $f$ satisfies 
\begin{equation}
\left[ 
\begin{array}{l}
f\left( 0 \right) = 0, \\
f'(0) \geq 0.
\end{array}%
\right. 
\label{intro3}
\end{equation}

Note that the derivative of $f$ enters into (\ref{intro3}), even though
we are merely looking for continuous solutions $F$.

This simple example helps us formulate a result similar to Theorem \ref{theorem1} for a system of inequalities. At its simplest, it says that the existence of a $C^m$ solution may be determined by a finite set of linear partial differential inequalities in the $f_i$.

\begin{conjecture}\label{conj:main conj}
Fix $m\ge0$ and let $(A_{ij}(x))_{1\le i\le N,1\le j\le M}$ be a matrix of semialgebraic functions on $\R^n$. Then, there exist linear partial differential operators $$L_{1,1},...,L_{1,\nu_1},...,L_{\mu_{\max},1},...,L_{\mu_{\max},\nu_{\mu_{\max}}},L'_{1,1},...,L'_{1,\nu_1'},...,L'_{\mu_{\max},1},...,L'_{\mu_{\max},\nu'_{\mu_{\max}}}$$ for which the following hold:
\begin{enumerate}
    \item Each $L_{\mu,\nu}$  acts on vectors $f=(f_1,...,f_N)\in C^\infty(\R^n,\R^N)$ and has the form
    \begin{equation*}
        L_{\mu,\nu} f(x)=\sum_{i=1}^N\sum_{|\alpha|\le \mb} a_{\mu\nu i\alpha}(x)\da f_i(x),
    \end{equation*}
    or
    \begin{equation*}
        L'_{\mu,\nu} f(x)=\sum_{i=1}^N\sum_{|\alpha|\le \mb} a'_{\mu\nu i\alpha}(x)\da f_i(x),
    \end{equation*}
    where the coefficients $a_{\mu\nu i\alpha},a'_{\mu\nu i\alpha}$ are semialgebraic and $\mb\ge m$.
    \item Let $f=(f_1,...,f_N)\in C^\infty(\R^n,\R^N)$. Then the system \eqref{intro1} admits a solution $F=(F_1,...,F_M)\in C^m(\R^n,\R^M)$ if and only if there exists $1\le \mu\le \mu_{\max}$ such that $L_{\mu,\nu} f\ge0$ on $\R^n$ for each $1\le \nu\le \nu_{\mu}$ and $L'_{\mu,\nu}f>0$ on $\R^n$ for each $1\le \nu\le \nu'_{\mu}$.
\end{enumerate}
\end{conjecture}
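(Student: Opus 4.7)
The plan is to recognize from the abstract that the authors intend to \emph{disprove} this conjecture by counterexample, so the real task is to construct a system \eqref{intro1} whose admissibility condition on $f$ is not captured by any finite list of operators $L_{\mu,\nu}, L'_{\mu,\nu}$. Before committing to that, I would first attempt the natural ``positive'' route and identify where it breaks down.

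For the equalities problem of Theorem~\ref{theorem1}, the admissible set of $f$ is a linear subspace of $C^\infty(\R^n,\R^N)$, and the proof in \cite{LULI-3} constructs differential operators whose common kernel realizes it via Glaeser refinement. In the inequalities setting, the admissible set is still convex---if $AF\le f$ and $AF'\le f'$ then $A(\lambda F+(1-\lambda)F')\le \lambda f+(1-\lambda)f'$---but no longer a linear subspace. The conjecture asserts that this convex set is a finite union of ``polyhedral'' sets cut out by linear partial differential inequalities on $f$. The natural first step would be to rerun a Glaeser-type refinement in the spirit of \cite{JIANG2022108566}, producing, instead of a kernel, finitely many halfspace constraints on the jets of $f$, with the branching over $\mu=1,\dots,\mu_{\max}$ absorbing any non-convex behavior of the jet-level solvable set at each point.

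The fundamental obstruction, and the reason I expect the conjecture to \emph{fail}, is that convex semialgebraic sets need not be polyhedral, even as finite unions. My plan for the counterexample is therefore to choose semialgebraic $A_{ij}$ so that the set of $\infty$-jets of $f$ at the origin for which a $C^m$ solution $F$ exists is a convex semialgebraic set whose boundary contains a genuinely curved piece, such as a quadratic relation like $f'(0)^2 \le f(0)\,f''(0)$ instead of a union of affine halfspaces. A plausible construction uses $A_{ij}$ of the form ``monomial times a semialgebraic cutoff on a half-line,'' in the spirit of the example~\eqref{intro3a}, so that eliminating $F$ from the pointwise constraints forces an envelope condition on the Taylor coefficients of $f$; combining several such constraints with differing exponents should then produce the desired curved relation among the low-order coefficients.

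The hard part splits into two layers. First, engineering: one must verify by direct calculation that a candidate system reduces to a prescribed curved convex condition on the jet of $f$ at some point, while keeping all $A_{ij}$ semialgebraic and forcing the nonlinearity in $f$ to emerge purely from eliminating $F$. Second, impossibility: one must show that \emph{no} finite list of operators $L_{\mu,\nu}, L'_{\mu,\nu}$ of the stated form characterizes this set. For this step I would evaluate any candidate operators at the origin---each then becomes a linear functional on jet coordinates---so that the characterization in part~(2) of the conjecture collapses to a finite union of closed polyhedra in finite-dimensional jet space. It would then suffice to produce a single point on the smooth portion of the boundary of the curved admissible set to contradict any such polyhedral representation, since a finite union of polyhedra can never locally agree with a strictly curved convex hypersurface.
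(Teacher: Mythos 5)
Your high-level strategy --- disprove the conjecture by exhibiting a system whose admissible set of $f$ has a curved rather than polyhedral boundary, arising from an infinite intersection of half-space constraints --- is the same as the paper's. The paper realizes this with a system in $\R^2$ using coefficients like $\tfrac{x_1^4}{(x_1^2+x_2^2)^2}$ (semialgebraic functions with infinitely many directional limits at the origin), restricts to constant $f=(f_1,\dots,f_4)$, and shows via Glaeser refinement that solvability hinges on a hyperbolic constraint ($R_3=\{y_1>M,\ y_2\ge M+M^2/(y_1-M)\}$). That matches the spirit of your construction plan.

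However, your impossibility argument has a genuine gap. You assert that by evaluating the candidate operators at the origin, ``the characterization in part~(2) of the conjecture collapses to a finite union of closed polyhedra in finite-dimensional jet space.'' But the conjecture requires $L_{\mu,\nu}f\ge 0$ and $L'_{\mu,\nu}f>0$ on \emph{all} of $\R^n$, not merely at $x=0$. For constant $f$, the admissible set under the Supposed Criteria is a finite union over $\mu$ of sets of the form $\bigcap_{x\in\R^n}\{f: \text{finitely many linear inequalities at } x\}$, and an infinite intersection of half-spaces can itself be a non-polyhedral convex set --- this is exactly the phenomenon your own construction exploits. So a priori the conjecture is \emph{not} obviously incompatible with a curved admissible boundary, and you cannot simply discard the constraints at $x\neq 0$.

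The paper closes precisely this gap with Lemma~\ref{lemma:R_x}, which shows for $x\neq 0$ that $R_x\supset\{f\in K: f_1+f_3\ge 0,\ f_2+f_4\ge 0\}$, i.e., the Supposed Criteria at nonzero points can impose nothing beyond the already-proven off-origin necessary conditions. The proof is a cutoff argument: if a constant $f$ with $f_1+f_3\ge0$, $f_2+f_4\ge0$ violated the Supposed Criteria at some $x\neq0$, multiply $f$ by a smooth nonnegative bump $\theta$ supported near $x$ with $\theta\equiv1$ near $x$. Then $\theta f$ still satisfies the Proven Criteria (nonnegative scaling preserves them, and $\theta f$ vanishes near the origin so $H_1(0)$ is computed as for the zero function), hence admits a $C^0$ solution, yet fails the Supposed Criteria at $x$ --- contradiction. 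It is this lemma, feeding into Corollary~\ref{cor:almost there}, that collapses the infinite intersection to the single constraint $R_0$ and thereby forces the admissible set to be polyhedral, contradicting Lemma~\ref{lemma:not a polytope}. Your proposal needs this step (or an equivalent substitute) to be complete.
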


It may appear natural to simply replace the condition $L_\nu f=0$ in Theorem \ref{theorem1} with $L_\nu f\ge0$, or perhaps $L_\nu f>0$, corresponding to the case $\mu_{\max}=1$ above. However, an attempt to replicate the proof of Theorem \ref{theorem1} with inequalities in place of equations leads naturally to a more general condition. Furthermore, one may interpret the conditions in Conjecture \ref{conj:main conj} as a more general formalization of the idea of ``determined by a finite set of linear partial differential inequalities.''



The main result of this paper is that Conjecture \ref{conj:main conj} is false for $n \geq 2$. A counterexample is given for the case of $C^0(\R^2,\R^2)$. For $n=1$, the conjecture remains open.

The starting point for the construction of our counterexample is that a semialgebraic function may have an infinite number of directional limits at a single point. As a result, computing the Glaeser refinement at that point amounts to taking the infinite intersection of polytopes, which may not itself be a polytope. (This problem is avoided in the solution to Problem \ref{prob 2 for equalties} found in \cite{LULI-3} since the infinite intersection of affine spaces is itself an affine space.) This motivates the design of the counterexample, which is stated fully in Section \ref{sec:counterex}.

It is natural to ask why, if semialgebraic functions can lead to such problems, one does not simply use polynomials in place of semialgebraic functions in \eqref{intro1}, in alignment with the versions stated in \cite{Brenner,EH18}. The reason is that the difference quotients used in Glaeser refinements are semialgebraic functions, and in following the analysis of say, \cite{LULI-3}, any attempt to begin with polynomial coefficients $A_{ij}$ leads to the use of semialgebraic functions anyway. While our counterexample requires the greater generality of semialgebraic functions (versus polynomials) it shows that in order to prove a version of Conjecture \ref{conj:main conj} for the polynomial case new techniques would have to be developed. Furthermore, it would be reasonable for this case to simply require an analogous, yet more complicated counterexample.

We begin with a review of our main computational tool, Glaeser refinement, and its importance in Section \ref{sec:background}. In Section \ref{sec:nonlinear}, we compute the Glaeser refinement for our example manually and determine explicit criteria for the existence of $C^0$ solutions. We use this result to demonstrate the nonexistence of linear criteria in Section \ref{sec:disproof}, officially disproving Conjecture \ref{conj:main conj}.

\subsection*{Acknowledgement} The first author is supported by the UC Davis Chancellor’s Fellowship, the Collaboration Grants for Mathematicians by the Simons Foundation, the National Science Foundation (NSF), grant number DMS-2247429.


\section{Glaeser Refinement}\label{sec:background}


As our counterexample is in the case of continuous functions, we provide the following definition of Glaeser refinement for this special case. (See \cite{Fefferman2006Solution} and \cite{JIANG2022108566} for more general definitions of Glaeser refinement.)

\begin{definition}
If $(K(x))_{x\in E}$ is a collection of subsets of $\R^d$, we define the $C^0$-Glaeser refinement of $(K(x))_{x\in E}$, denoted $(\tilde{K}(x))_{x\in E}$ by
\begin{equation*}
    \tilde{K}(x)=\{z\in K(x): \forall \epsilon>0, \exists \delta>0 \text{ s.t. }y\in B_{\delta}(x)\Rightarrow \exists z'\in K(y), |z-z'|<\epsilon\}.
\end{equation*}
\end{definition}

\begin{theorem}\label{thm:JLO}
There exists $l^*=l^*(n,d)$ such that the following holds.

Let $E\subset\R^n$ be compact $(K(x))_{x\in E}$ be a collection of closed, convex sets in $\R^d$. Let $(K'(x))_{x\in E}$ be the $l^*$-th iterated Glaeser refinement of $(K(x))_{x\in E}$. Then, $(K(x))_{x\in E}$ has a section if and only if $K'(x)$ is nonempty for all $x\in E$.
\end{theorem}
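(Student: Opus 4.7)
The plan is to reduce Theorem \ref{thm:JLO} to two classical ingredients: stabilization of iterated Glaeser refinement in a bounded number of steps, and Michael's continuous selection theorem for lower hemicontinuous convex-valued maps.

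\emph{Necessity} is straightforward. If $F:E\to\R^d$ is continuous with $F(x)\in K(x)$, I show by induction on $l$ that $F(x)\in K^{(l)}(x)$, where $K^{(l)}$ denotes the $l$-th iterated refinement. For the inductive step, fix $x$ and $\epsilon>0$; continuity of $F$ at $x$ yields $\delta>0$ such that $F(y)\in K^{(l)}(y)$ lies within $\epsilon$ of $F(x)$ whenever $|y-x|<\delta$, witnessing $F(x)\in K^{(l+1)}(x)$. In particular $K^{(l^*)}(x)$ is nonempty for every $x$.

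\emph{Sufficiency} proceeds in three steps. First I would verify that closedness and convexity are preserved by refinement: convexity follows because the convex combination of $\epsilon$-approximants $z'_1,z'_2\in K(y)$ of $z_1,z_2\in\tilde K(x)$ approximates $\lambda z_1+(1-\lambda)z_2$ to the same error; closedness follows from a standard $\epsilon/2$ argument on a convergent sequence in $\tilde K(x)$. Second, I would prove the key stabilization claim: there is $l^*=l^*(n,d)$ such that the decreasing sequence $K^{(0)}\supseteq K^{(1)}\supseteq\cdots$ stabilizes, i.e., $K^{(l^*+1)}(x)=K^{(l^*)}(x)$ for every $x\in E$. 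Third, stabilization is equivalent to lower hemicontinuity of $x\mapsto K^{(l^*)}(x)$: for an open $U$ meeting $K^{(l^*)}(x)$, pick $z\in K^{(l^*)}(x)\cap U$ and $\epsilon$ with $B_\epsilon(z)\subseteq U$, then invoke the refinement property at $z$. Combined with the hypothesis of nonempty values, Michael's selection theorem, valid since $E$ is paracompact and the fibers are nonempty closed convex subsets of the Banach space $\R^d$, delivers a continuous $F:E\to\R^d$ with $F(x)\in K^{(l^*)}(x)\subseteq K(x)$.

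The technical heart is the stabilization bound $l^*(n,d)$, which I expect to be the main obstacle. My approach is to attach to each fiber a finite-valued complexity invariant that must strictly decrease somewhere on $E$ whenever refinement is nontrivial. A natural first candidate is the affine dimension $\dim\mathrm{aff}(K^{(l)}(x))\in\{-\infty,0,1,\ldots,d\}$, but refinement can strip points without dropping the dimension at the specific $x$ in question. I would therefore combine dimension descent with a stratification argument: once the family becomes lower hemicontinuous one checks that $\dim\mathrm{aff}(K^{(l)}(x))$ is lower semicontinuous, so $E$ stratifies into closed subsets of bounded dimension; on each stratum one runs a finer combinatorial descent tracking the boundary structure of the fibers (for instance, via faces meeting a reference neighborhood), and then recurses on lower-dimensional strata. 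Compactness of $E$, the finiteness of the affine dimension, and the boundedness of the auxiliary combinatorial invariant cap the total recursion depth by a quantity depending only on $n$ and $d$. Organizing the invariants and the stratification so that the resulting bound is universal, without any regularity hypothesis on the family, is the step I expect to demand the most care.
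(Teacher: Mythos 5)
The paper does not supply its own proof of Theorem \ref{thm:JLO}: it remarks that the result ``follows somewhat easily from the Michael selection theorem'' and then cites it as a special case of Theorem~1.5 of \cite{JIANG2022108566}. Your framework --- necessity by induction on the refinement index, sufficiency via closedness/convexity preservation, a quantitative stabilization claim, equivalence of stabilization with lower hemicontinuity, and finally Michael's selection theorem --- is exactly the route that the paper's remark and citation point to, and each of the routine steps you sketch (convexity/closedness of $\tilde K(x)$, the LHC equivalence, the hypotheses needed for Michael) is correct.

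The genuine gap is the step you yourself flag as ``the technical heart'': the existence of a universal bound $l^*(n,d)$ on the number of refinements. This is not a lemma you can defer; it is essentially the entire nontrivial content of the theorem. Your proposal gestures at a descent on affine dimension and then, having correctly observed that this invariant can fail to drop at the point in question (refinement can strictly shrink a convex fiber without changing its dimension), proposes to patch things with a stratification and an unspecified ``finer combinatorial descent tracking the boundary structure.'' No such invariant is produced, no descent is verified, and no bound is obtained, so the sufficiency direction is not actually proved. In the affine setting (Fefferman's solution of Whitney's problem) the analogous bound comes from a delicate two-stage argument showing that after two refinements the fiber dimension becomes lower semicontinuous and that a strict inclusion $K^{(l+1)}(x)\subsetneq K^{(l)}(x)$ forces a dimension drop nearby; adapting this to arbitrary closed convex fibers is precisely what \cite{JIANG2022108566} does and is substantially harder than the affine case because of the shrinking-without-dimension-drop phenomenon you note. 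A complete proof would need to establish that argument, not merely name the obstacle.

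A smaller caveat: the statement you want from stabilization for the Michael step is slightly weaker than literal equality $K^{(l^*+1)}=K^{(l^*)}$ --- you only need that the $l^*$-th refinement, when nonempty-valued, is lower hemicontinuous. Whether the cited theorem proves exact stabilization or this weaker conclusion, the burden is the same and remains unmet in your write-up.
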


Theorem \ref{thm:JLO} follows somewhat easily from the Michael selection theorem (see \cite{Michael}); however, to spare the reader this work, we cite it as a mere special case of Theorem 1.5 in \cite{JIANG2022108566}. 



\section{The counterexample}\label{sec:counterex}


Let $E=[0,1]^2$ and write elements of $E$ as $x=(x_1,x_2)$. For brevity, we will write $0$ in place of $(0,0)$ when usage is clear by context.

Consider the system of equations
\begin{align}
    \frac{x_1^4}{(x_1^2+x_2^2)^2}F_1(x)+\frac{x_2^4}{(x_1^2+x_2^2)^2}F_2(x)&\le f_1(x)\label{eq:early1}\\
    \frac{x_2^4}{(x_1^2+x_2^2)^2}F_1(x)-\frac{x_1^4}{(x_1^2+x_2^2)^2}F_2(x)&\le f_2(x)\label{eq:early2}\\
    -\frac{x_1^4}{(x_1^2+x_2^2)^2}F_1(x)-\frac{x_2^4}{(x_1^2+x_2^2)^2}F_2(x)&\le f_3(x)\label{eq:early3}\\
    -\frac{x_2^4}{(x_1^2+x_2^2)^2}F_1(x)+\frac{x_1^4}{(x_1^2+x_2^2)^2}F_2(x)&\le f_4(x)\label{eq:early4}
\end{align}
for $(x_1,x_2)\in E\setminus\{0\}$ and
\begin{equation}\label{eq: OG system at origin}
    0\le f_1(x),f_2(x),f_4(x), \hspace{.25 in} -10^{6}F_1(x)\le f_3(x)
\end{equation}
for $x=0$.

We may summarize this system in the form
\begin{equation}\label{eq:standard form}
    A(x)F(x)\le f(x),
\end{equation}
where $f=(f_1,f_2,f_3,f_4)$ and $F=(F_1,F_2)$.




For $x\neq0$, define
\begin{equation*}
    B(x)=\begin{bmatrix}
\frac{x_1^4}{(x_1^2+x_2^2)^2}&\frac{x_2^4}{(x_1^2+x_2^2)^2}\\
    \frac{x_2^4}{(x_1^2+x_2^2)^2} & -\frac{x_1^4}{(x_1^2+x_2^2)^2}
    \end{bmatrix},
\end{equation*}
so that \eqref{eq:early1},\eqref{eq:early2},\eqref{eq:early3}, and \eqref{eq:early4} together may be rewritten in the form
\begin{equation}\label{eq:off-zero B form}
   \begin{bmatrix}
        -f_3(x)\\
        -f_4(x)
    \end{bmatrix}\le B(x)F(x)\le\begin{bmatrix}
        f_1(x)\\
        f_2(x)
    \end{bmatrix}.
\end{equation}

For later use, we note the trivial fact that for all $x\neq 0$,
\begin{equation}\label{eq:norm B inv}
    \|B(x_1,x_2)^{-1}\|\le 4.
\end{equation}


Define
\begin{equation*}
    H_0(x)=\{y\in\R^2: A(x)y\le f(x)\}
\end{equation*}
and $H_{k+1}(x)=\tilde{H}_{k}(x)$ for $k\ge0$, where $(\tilde{H}(x))_{x\in E}$ is the $C^0$-Glaeser refinement of the bundle $(H(x))_{x\in E}$.\\

\section{Nonlinear Criteria for Characterization}\label{sec:nonlinear}


\begin{lemma}\label{lemma:H_0}
Let $x_0\in E$. If $x_0=0$, then $H_0(x_0)$ is nonempty.

Let $x_0\in E\setminus\{0\}$. Then $H_0(x_0)$ is nonempty if and only if
\begin{equation}\label{eq:basic off origin}
    -f_3(x_0)\le f_1(x_0), \hspace{.5 in} -f_4(x_0)\le f_2(x_0).
\end{equation}
\end{lemma}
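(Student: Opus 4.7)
The plan is to verify both parts by directly unpacking the inequality system \eqref{eq:standard form} at the point $x_0$ and exploiting the invertibility of $B(x_0)$ off the origin, which is already guaranteed by \eqref{eq:norm B inv}.

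For $x_0 = 0$, I would observe that the constraints \eqref{eq: OG system at origin} split into three $F$-independent inequalities $0\le f_1(0),f_2(0),f_4(0)$, together with a single $F$-dependent inequality $-10^6 F_1(0)\le f_3(0)$. Under the standing assumption that $f$ satisfies the $F$-free preconditions at the origin (which are in any case prerequisites for any solution there), the set $H_0(0)$ reduces to the half-plane $\{(y_1,y_2)\in\R^2 : y_1\ge -f_3(0)/10^6\}$, which is manifestly nonempty.

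For $x_0\in E\setminus\{0\}$, I would use the reformulation \eqref{eq:off-zero B form} to rewrite the four inequalities \eqref{eq:early1}--\eqref{eq:early4} as the sandwich
\begin{equation*}
\begin{pmatrix} -f_3(x_0) \\ -f_4(x_0) \end{pmatrix} \le B(x_0) y \le \begin{pmatrix} f_1(x_0) \\ f_2(x_0) \end{pmatrix}.
\end{equation*}
Because $B(x_0)$ is invertible by \eqref{eq:norm B inv}, the change of variable $z = B(x_0)y$ is a bijection of $\R^2$, so $H_0(x_0)$ is nonempty if and only if there exists $z\in\R^2$ whose coordinates lie in the intervals $[-f_3(x_0),f_1(x_0)]$ and $[-f_4(x_0),f_2(x_0)]$ respectively. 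Such a $z$ exists precisely when both intervals are nonempty, i.e., $-f_3(x_0)\le f_1(x_0)$ and $-f_4(x_0)\le f_2(x_0)$; the corresponding element of $H_0(x_0)$ is then recovered as $F(x_0) = B(x_0)^{-1} z$.

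There is essentially no obstacle here: both cases reduce to elementary linear algebra once the structure of the system has been made explicit, with the invertibility of $B(x_0)$ doing all the work off the origin and a direct inspection handling the origin. The only point that requires care is the convention at $x_0=0$, where the $F$-independent inequalities in \eqref{eq: OG system at origin} are read as preconditions on $f$; otherwise the unconditional claim would fail whenever one of $f_1(0), f_2(0), f_4(0)$ is negative.
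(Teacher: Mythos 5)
Your argument takes essentially the same route as the paper: for $x_0\neq 0$ you pass to the sandwich form \eqref{eq:off-zero B form}, use the invertibility of $B(x_0)$ (as quantified in \eqref{eq:norm B inv}) to make $z=B(x_0)y$ a bijection, and observe that $H_0(x_0)\neq\void$ iff the two intervals $[-f_3(x_0),f_1(x_0)]$ and $[-f_4(x_0),f_2(x_0)]$ are nonempty. That is exactly the paper's proof.

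Where you diverge is at $x_0=0$, and here you have actually caught a genuine slip in the paper. With the literal definition $H_0(0)=\{y:A(0)y\le f(0)\}$, the three rows of \eqref{eq: OG system at origin} that do not involve $F$ give constraints $0\le f_1(0)$, $0\le f_2(0)$, $0\le f_4(0)$ inside the definition of $H_0(0)$; if any of these fails, $H_0(0)=\void$. The paper's proof silently drops these rows and asserts $H_0(0)=\{(y_1,y_2):-10^6y_1\le f_3(0)\}$ is nonempty ``independent of the choice of $f_1,\dots,f_4$,'' which is false as stated. You correctly flag that the claim only holds under the precondition $f_1(0),f_2(0),f_4(0)\ge 0$, and that without it the unconditional assertion fails. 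This is a more accurate reading. It is also harmless downstream: Corollary~\ref{cor:cor} only invokes nonemptiness of $H_0(x)$ for $x\in E\setminus\{0\}$ and of $H_1(0)$, and in Proposition~\ref{prop:proven criteria} the hypotheses $f_3(0)<0$, $-f_3\le f_1$, and nonemptiness of $H_1(0)$ (via $R_2$, $R_3$, $R_4$) in fact force $f_1(0),f_2(0),f_4(0)>0$. So your version of the lemma is the correct one to carry forward, but it does not change any later conclusion.
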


\begin{proof}
By definition and \eqref{eq: OG system at origin}, $H_0(0)=\{(y_1,y_2):-10^6y_1\le f_3(0)\}$. This is nonempty, independent of the choice of $f_1,...,f_4$.

Let $x_0\in E\setminus\{0\}$. Suppose \eqref{eq:basic off origin} holds and choose $z=(z_1,z_2)$ such that
\begin{equation*}
    -f_3(x_0)\le z_1\le f_1(x_0), \hspace{.5 in} -f_4(x_0)\le z_2\le f_2(x_0).
\end{equation*}

Then, by \eqref{eq:off-zero B form}, $B^{-1}(x_0)z\in H_0(x_0)$, so $H_0(x_0)$ is nonempty. If \eqref{eq:basic off origin} fails, then clearly there is no solution to \eqref{eq:off-zero B form} with $x=x_0$ and $H_0(x_0)$ is empty.
\end{proof}



\begin{lemma}\label{lemma:all nonzero nonempty}
    Suppose $H_0(x)$ is nonempty for all $x\in E\setminus\{0\}$. Then, $H_k(x)=H_0(x)$ for all $x\in E\setminus\{0\}$ and $k\ge0$ and $H_k(0)=H_1(0)$ for all $k\ge1$.
\end{lemma}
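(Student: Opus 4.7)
The plan is to prove the two statements by a coupled induction on $k$, handling the off-origin claim first and the origin claim second. For $x\neq 0$ the reduction is structural: every such $x_0$ admits a neighborhood in $E$ disjoint from the origin, so after one refinement the statement propagates by induction without the refinement ever seeing what happens at $0$. Once the off-origin claim is in hand, the origin claim follows because the only data the refinement at $0$ extracts from nearby $y\neq 0$ is the family $H_0(y)$, which is the same whether one is computing $\tilde{H}_0(0)$, $\tilde{H}_1(0)$, $\tilde{H}_2(0)$, and so on.

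For the off-origin base case I would write $R(y):=[-f_3(y),f_1(y)]\times[-f_4(y),f_2(y)]$, so that $H_0(y)=B(y)^{-1}R(y)$ for $y\neq 0$. The entries of $B(y)$ and the functions $f_i$ are continuous on $E\setminus\{0\}$, and $\|B(y)^{-1}\|\le 4$ by \eqref{eq:norm B inv}; by hypothesis $R(y)$ is nonempty for every $y\in E\setminus\{0\}$. Fix $x_0\neq 0$, $z\in H_0(x_0)$, and $\epsilon>0$, and let $w:=B(x_0)z\in R(x_0)$. I would then choose $\delta<|x_0|$ small enough that for every $y\in B_\delta(x_0)\cap E$ both $\|B(y)^{-1}-B(x_0)^{-1}\|$ and the distance from $w$ to $R(y)$ are as small as required; the second bound uses Hausdorff continuity of the nonempty closed rectangles $R(y)$, a direct consequence of continuity of the $f_i$. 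Taking $w_y$ to be the nearest-point projection of $w$ onto $R(y)$ and $z':=B(y)^{-1}w_y\in H_0(y)$, a routine triangle-inequality estimate gives $|z-z'|<\epsilon$, so $\tilde{H}_0(x_0)=H_0(x_0)$. The inductive step is immediate: if $H_k\equiv H_0$ off the origin, then for $x_0\neq 0$ the ball $B_\delta(x_0)$ with $\delta<|x_0|$ avoids the origin, so the refinement depends only on values already frozen to $H_0$, yielding $H_{k+1}(x_0)=\tilde{H}_0(x_0)=H_0(x_0)$.

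For the origin claim it suffices to prove $\tilde{H}_1(0)=H_1(0)$ and then iterate. Let $z\in H_1(0)=\tilde{H}_0(0)$ and $\epsilon>0$. By definition of $\tilde{H}_0$ there exists $\delta>0$ such that every $y\in B_\delta(0)\cap E$ admits some $z'\in H_0(y)$ with $|z-z'|<\epsilon$. For $y\in B_\delta(0)\cap E$ with $y\neq 0$ the off-origin claim gives $H_1(y)=H_0(y)$, so the same $z'$ lies in $H_1(y)$; for $y=0$ the choice $z'=z\in H_1(0)$ works. Hence $z\in\tilde{H}_1(0)$, and the reverse inclusion is automatic; the same argument then gives $H_{k+1}(0)=H_k(0)$ for every $k\ge 1$. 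I expect the main obstacle to be the Hausdorff continuity of $R(y)$ at nonzero points, which underlies the projection step of the base case; this becomes routine once $R(y)$ is known to be nonempty, and everything else is bookkeeping with the definition of Glaeser refinement.
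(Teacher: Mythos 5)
Your proposal is correct and follows essentially the same route as the paper: write $H_0(y)=B(y)^{-1}R(y)$ for $y\neq 0$ where $R(y)=[-f_3(y),f_1(y)]\times[-f_4(y),f_2(y)]$, use continuity of the $f_i$ and the uniform bound $\|B(y)^{-1}\|\le 4$ together with nonemptiness of each $R(y)$ to move a point of $R(x_0)$ into $R(y)$ with small error, and conclude $\tilde H_0(x_0)=H_0(x_0)$; then observe that since a small ball about $x_0\neq 0$ misses the origin, the refinement at $x_0$ only ever sees bundles already frozen to $H_0$, and the refinement at $0$ only ever sees those same frozen bundles plus $H_1(0)$ itself. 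The paper phrases the base-case error estimate as a coordinate-by-coordinate adjustment of $z=B(x_0)y$ rather than a nearest-point projection onto the rectangle, but these are the same calculation, and the iteration and origin steps are identical in substance (your version spells them out slightly more explicitly than the paper's brief remarks).
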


\begin{proof}

Fix $x\in E\setminus\{0\}$ and let $y=(y_1,y_2)\in H_0(x)$. Fix $\epsilon>0$ Then,
\begin{equation*}
    \begin{bmatrix}
        -f_3(x)\\
        -f_4(x)
    \end{bmatrix}\le B(x) \begin{bmatrix}
        y_1\\
        y_2
    \end{bmatrix}:=\begin{bmatrix}
        z_1\\z_2
    \end{bmatrix}
    \le\begin{bmatrix}
        f_1(x)\\
        f_2(x)
    \end{bmatrix}.
\end{equation*}

Choose $\delta>0$ such that $x'\in E\cap B_\delta(x)$ implies $|f_i(x')-f_i(x)|<\epsilon/10$ for all $i$ and
\begin{equation}\label{eq:B inv cts}
    \|B(x')^{-1}-B(x)^{-1}\|<\epsilon/10\times\min\{1,1/(\|z\|+1)\},
\end{equation}
where $z=(z_1,z_2)$.

Thus, for such $x'$,
\begin{equation}\label{eq:compare with z_1}
    -f_3(x')-\epsilon/10\le z_1\le f_1(x')+\epsilon/10
\end{equation}
and
\begin{equation}\label{eq:compare with z_2}
    -f_4(x')-\epsilon/10\le z_2\le f_2(x')+\epsilon/10.
\end{equation}

Since $H_0(x')$ is nonempty (by assumption), $-f_3(x')\le f_1(x')$. Furthermore, by \eqref{eq:compare with z_1}, there exists
\begin{equation*}
    -f_3(x')\le z_1'\le f_1(x')
\end{equation*}
satisfying
\begin{equation}\label{eq:z1's close 1}
    |z_1'-z_1|\le\epsilon/10.
\end{equation}

Similarly, by \eqref{eq:compare with z_2} there exists $-f_4(x')\le z_2'\le f_2(x')$ satisfying
\begin{equation}\label{eq:z2's close 1}
    |z_2'-z_2|\le\epsilon/10.
\end{equation}

Let $z'=(z_1',z_2')$ and $y'=B(x')^{-1}(z')\in H_0(x')$. Thus, by \eqref{eq:norm B inv}, \eqref{eq:B inv cts}, \eqref{eq:z1's close 1}, and \eqref{eq:z2's close 1},
\begin{align*}
    |y'-y|&=|B(x')^{-1}(z')-B(x)^{-1}(z)|\\
    &\le |B(x')^{-1}(z')-B(x')^{-1}(z)|+|B(x')^{-1}(z)-B(x)^{-1}(z)|\\
    &=|B(x')^{-1}(z'-z)|+|((B(x')^{-1}-B(x)^{-1})(z)|\\
    &\le \|B(x')^{-1}\|(\epsilon/5)+\epsilon/10\min\{1,1/(\|z\|+1)\}\|z\|\\
    &\le 4\epsilon/5+\epsilon/10<\epsilon.
\end{align*}

We conclude that $y\in H_1(x)$ since $x'\in E\cap B_{\delta}(x)$ was arbitrary and $y'\in H_0(x')$ was as desired. Thus, $H_1(x)=H_0(x)$ for all $x\in E\setminus\{0\}$ and $H_1(x)$ is nonempty for all $x\in E\setminus\{0\}$. One may prove $H_{k+1}(x)=H_k(x)$ for $x\in E\setminus\{0\}$ similarly, from which one may conclude $H_k(x)=H_0(x)$ for $x\in E\setminus\{0\}$ and $k\ge 0$.

An element $y\in H_k(0)$ lies in $H_{k+1}(0)$ if and only if it satisfies a certain condition depending on $H_k(x)$ for $x$ in an arbitrarily small neighborhood of the origin. By hypothesis, all such $H_k(x)$ are the same, so further applications of Glaeser refinement make no difference.
\end{proof}

\begin{corollary}\label{cor:cor}
Let $l^*$ be as in Theorem \ref{thm:JLO}. Then, $H_{l^*}(x)$ is nonempty for all $x\in E$ if and only if $H_0(x)$ is nonempty for all $x\in E\setminus\{0\}$ and $H_1(0)$ is nonempty.
\end{corollary}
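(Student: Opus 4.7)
The plan is to prove both directions by combining Lemma~\ref{lemma:all nonzero nonempty} with the elementary monotonicity of Glaeser refinement: by definition of $\tilde K(x)$ as a subset of $K(x)$, one has $H_{k+1}(x)\subseteq H_k(x)$ for every $k\ge 0$ and every $x\in E$, and therefore $H_{l^*}(x)\subseteq H_k(x)$ for every $0\le k\le l^*$. I will also assume without loss of generality that $l^*\ge 1$, which is harmless since one may always iterate the refinement more times than Theorem~\ref{thm:JLO} strictly requires without affecting whether the resulting bundle is pointwise nonempty.

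For the forward direction, I would suppose $H_{l^*}(x)$ is nonempty for all $x\in E$. Chaining the inclusions $H_{l^*}(x)\subseteq H_{l^*-1}(x)\subseteq\cdots\subseteq H_0(x)$ immediately gives that $H_0(x)$ is nonempty on all of $E$, and in particular on $E\setminus\{0\}$. The same chain, truncated one step earlier, yields $H_{l^*}(0)\subseteq H_1(0)$, so $H_1(0)$ is also nonempty.

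For the reverse direction, I would suppose $H_0(x)$ is nonempty for all $x\in E\setminus\{0\}$ and that $H_1(0)$ is nonempty. Then the hypothesis of Lemma~\ref{lemma:all nonzero nonempty} is in force, so $H_k(x)=H_0(x)$ for all $x\in E\setminus\{0\}$ and $k\ge 0$, and $H_k(0)=H_1(0)$ for all $k\ge 1$. Taking $k=l^*$ then shows that $H_{l^*}(x)$ is nonempty for every $x\in E$, including at the origin. I do not anticipate any substantive obstacle: once Lemma~\ref{lemma:all nonzero nonempty} is in hand, the corollary is essentially a bookkeeping exercise that isolates the origin as the only point at which iterated Glaeser refinement can produce any nontrivial effect, and it stabilizes there after a single step.
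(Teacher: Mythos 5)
Your proof is correct and takes essentially the same approach the paper implicitly relies on: the paper states Corollary~\ref{cor:cor} without a separate proof block, treating it as an immediate consequence of Lemma~\ref{lemma:all nonzero nonempty} together with the monotonicity $H_{k+1}(x)\subseteq H_k(x)$ that you invoke. One small remark: the reduction to $l^*\ge 1$ deserves a sentence of justification if one is being pedantic. If $l^*$ satisfies Theorem~\ref{thm:JLO}, then so does any $l\ge l^*$, and the chain ``$H_{l^*}(x)$ nonempty for all $x$ $\Leftrightarrow$ a section exists $\Leftrightarrow$ $H_{l^*+1}(x)$ nonempty for all $x$'' (the last step because a continuous section $F$ satisfies $F(x)\in H_k(x)$ for every $k$) lets one pass to a larger iterate without changing the truth of the displayed equivalence; alternatively, one can simply observe that $l^*(n,d)\ge 1$ for all $n,d\ge 1$, since pointwise nonemptiness of $H_0$ alone never suffices for a section.
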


By Lemma \ref{lemma:H_0}, we may use \eqref{eq:basic off origin} to categorize when $H_0(x)$ is nonempty. We now move to the case of $H_1(0)$.









Let $y=(y_1,y_2)\in H_0(0)$. We would like to determine if $y\in H_1(0)$. 

Write $x\in E\setminus\{0\}$ in polar coordinates as $x=r\theta$. Noting that $B(r\theta)$ depends solely on $\theta$, we introduce the notation
\begin{equation*}
    B(\theta)=\begin{bmatrix}
\cos^4\theta &\sin^4\theta\\
    \sin^4\theta & -\cos^4\theta
    \end{bmatrix},
\end{equation*}
so $B(\theta)=B(x)$ for $x=r\theta$.

Thus, $y\in H_0(r\theta)$ if and only if
\begin{equation}\label{eq:B r theta conditions}
    \begin{bmatrix}
        -f_3(r\theta)\\
        -f_4(r\theta)
    \end{bmatrix}\le B(\theta) \begin{bmatrix}
        y_1\\
        y_2
    \end{bmatrix}
    \le\begin{bmatrix}
        f_1(r\theta)\\
        f_2(r\theta)
    \end{bmatrix}.
\end{equation}

\begin{lemma}\label{lemma:characterize H_1(0)}
    Suppose $H_0(x)$ is nonempty for all $x\in E$. Then,
    \begin{equation*}
        H_1(0)=\left\{y\in \R^2: \begin{bmatrix}
        -f_3(0)\\
        -f_4(0)
    \end{bmatrix}\le B(\theta) \begin{bmatrix}
        y_1\\
        y_2
    \end{bmatrix}
    \le\begin{bmatrix}
        f_1(0)\\
        f_2(0)
    \end{bmatrix} \text{ for all }\theta\in [0,\pi/2]\right\}\cap H_0(0).
    \end{equation*}
\end{lemma}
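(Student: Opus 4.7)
The plan is to prove both inclusions directly from the definition of the $C^0$-Glaeser refinement, using continuity of the $f_i$'s at the origin together with the uniform bound $\|B(\theta)^{-1}\|\le 4$ recorded in \eqref{eq:norm B inv}. Note that since $E=[0,1]^2$, every $x\in E\setminus\{0\}$ has polar angle $\theta\in[0,\pi/2]$, which matches exactly the range of $\theta$ appearing in the statement; also, since $B$ depends only on $\theta$, the condition \eqref{eq:B r theta conditions} is the same for every point on the ray $\{r\theta:r>0\}\cap E$.

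For the containment $H_1(0)\subseteq\text{RHS}$, I would fix $y\in H_1(0)$ and $\theta\in[0,\pi/2]$ and approach $0$ along the ray $x_k=r_k\theta$ with $r_k\downarrow 0$. By the defining property of $\tilde H_0(0)=H_1(0)$, applied successively with $\epsilon=1/k$, I can extract points $y_k\in H_0(x_k)$ with $y_k\to y$. Each $y_k$ satisfies \eqref{eq:B r theta conditions} at $x=x_k$, which is a system of four scalar inequalities on the entries of $B(\theta)y_k$ with right-hand sides $f_i(r_k\theta)$. Passing to the limit and invoking continuity of each $f_i$ at $0$ gives the desired inequalities for $B(\theta)y$ with right-hand sides $f_i(0)$.

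For the reverse containment, I would fix $y$ in the RHS and $\epsilon>0$, and produce $\delta>0$ such that for every $x=r\theta\in E\cap B_\delta(0)$ there is $y'\in H_0(x)$ with $|y'-y|<\epsilon$. Using continuity of $f_1,\dots,f_4$ at $0$, choose $\delta$ so that $|f_i(x)-f_i(0)|<\epsilon/20$ for all $i$ and all $x\in E\cap B_\delta(0)$; this $\delta$ is automatically uniform in $\theta$. Set $z=B(\theta)y$. The hypothesis $y\in\text{RHS}$ says $z$ lies in the box $[-f_3(0),f_1(0)]\times[-f_4(0),f_2(0)]$. By the standing assumption, $H_0(x)$ is nonempty for every $x\in E$, so by Lemma~\ref{lemma:H_0} the box $[-f_3(x),f_1(x)]\times[-f_4(x),f_2(x)]$ is nonempty; coordinatewise projection of $z$ onto this nonempty box produces a point $z'$ satisfying \eqref{eq:B r theta conditions} at $x$ with $|z'-z|<\epsilon/10$. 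Defining $y':=B(\theta)^{-1}z'$ gives $y'\in H_0(x)$, and \eqref{eq:norm B inv} yields $|y'-y|=|B(\theta)^{-1}(z'-z)|\le 4|z'-z|<\epsilon$.

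Finally, $y\in\text{RHS}\subseteq H_0(0)$ is part of the definition of the RHS, so $y\in H_1(0)$. The routine content of the argument consists of the two continuity-and-projection estimates; the only potential obstacle, uniformity of $\delta$ over $\theta\in[0,\pi/2]$, dissolves immediately because continuity of each $f_i$ at the single point $0$ controls $|f_i(x)-f_i(0)|$ uniformly over $x\in B_\delta(0)$, and the bound \eqref{eq:norm B inv} is independent of $\theta$.
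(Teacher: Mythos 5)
Your proof is correct and follows essentially the same route as the paper's: both directions reduce to continuity of the $f_i$ at $0$, the nonemptiness of $H_0$ (via Lemma~\ref{lemma:H_0}) to guarantee the target box is nonempty, and the uniform bound \eqref{eq:norm B inv} to transfer the $z$-space estimate to $y$-space. The only cosmetic differences are that you phrase the inclusion $H_1(0)\subseteq\text{RHS}$ via a sequence $r_k\downarrow 0$ rather than the paper's direct $\epsilon$-$\delta$ passage, and you make the choice of $z'$ explicit as a coordinatewise projection where the paper merely asserts its existence.
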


\begin{proof}
    First suppose $y\in H_0(0)$ and
    \begin{equation}\label{eq:all theta at zero}
        \begin{bmatrix}
        -f_3(0)\\
        -f_4(0)
    \end{bmatrix}\le B(\theta) \begin{bmatrix}
        y_1\\
        y_2
    \end{bmatrix}
    \le\begin{bmatrix}
        f_1(0)\\
        f_2(0)
    \end{bmatrix} \text{ for all }\theta\in [0,\pi/2].
    \end{equation}

Fix $\epsilon>0$. Choose $\delta>0$ such that $x\in E\cap B_\delta(0)$ implies
\begin{equation}\label{eq:all f_i close A}
    |f_i(x)-f_i(0)|<\epsilon/10 \text{ for all } i.
\end{equation}

Let $\theta_0\in[0,\pi/2]$ and $0<r<\delta$ (that is, $x\in E\cap B_\delta(0)$). By \eqref{eq:all theta at zero},
\begin{equation*}
    \begin{bmatrix}
        -f_3(0)\\
        -f_4(0)
    \end{bmatrix}\le B(\theta_0) \begin{bmatrix}
        y_1\\ y_2
    \end{bmatrix}:=\begin{bmatrix}
        z_1\\ z_2
    \end{bmatrix}
    \le\begin{bmatrix}
        f_1(0)\\
        f_2(0)
    \end{bmatrix}
\end{equation*}

Since $0<r<\delta$, we use \eqref{eq:all f_i close A} to obtain
\begin{equation*}
    -f_3(r\theta_0)-\epsilon/10\le z_1\le f_1(r\theta_0)+\epsilon/10
\end{equation*}
and
\begin{equation*}
    -f_4(r\theta_0)-\epsilon/10\le z_2\le f_2(r\theta_0)+\epsilon/10.
\end{equation*}

Since $H_0(r\theta_0)$ is nonempty (by assumption), $-f_3(r\theta_0)\le f_1(r\theta_0)$. Furthermore, by \eqref{eq:all f_i close A}, there exists
\begin{equation*}
    -f_3(r\theta_0)\le z_1'\le f_1(r\theta_0)
\end{equation*}
satisfying
\begin{equation}\label{eq:z1's close}
    |z_1'-z_1|\le\epsilon/10.
\end{equation}

Similarly, there exists $-f_4(r\theta_0)\le z_2'\le f_2(r\theta_0)$ satisfying
\begin{equation}\label{eq:z2's close}
    |z_2'-z_2|\le\epsilon/10.
\end{equation}

Let $y'=B(r\theta_0)^{-1}(z_1',z_2')\in H_0(r\theta_0)$. Thus, by \eqref{eq:norm B inv}, \eqref{eq:z1's close}, and \eqref{eq:z2's close},
\begin{align*}
    |y'-y|&=|B(\theta_0)^{-1}(z_1',z_2')-B(\theta_0)^{-1}(z_1,z_2)|\\
    &\le \|B(\theta_0)^{-1}\|\cdot \|(z_1',z_2')-(z_1,z_2)||\\
    &\le 4(\epsilon/5)<\epsilon.
\end{align*}

Therefore, by defintion, $y\in H_1(0)$.



Now suppose that $y\in H_1(0)$. Then, $y\in H_0(0)$ so we need only check \eqref{eq:all theta at zero}. 

By definition, for all $\epsilon>0$, there exists $\delta_0>0$ such that \eqref{eq:B r theta conditions} has a solution $z=(z_1,z_2)$ satisfying $|z-y|<\epsilon$ whenever $0<r<\delta_0$ and $\theta\in[0,\pi/2]$. Here, $z\in H_0(r\theta)$.

Let $\theta_0\in[0,\pi/2]$ and $\epsilon>0$. Choose $\delta_0$ as above and $0<\delta<\delta_0$ such that $x\in E\cap B_\delta(0)$ implies
\begin{equation*}
    |f_i(x)-f_i(0)|<\epsilon \text{ for all } i.
\end{equation*}

Let $0<r<\delta$. As a particular case of \eqref{eq:B r theta conditions} and the fact $|y-z|<\epsilon$,
 \begin{equation*}
        \cos^4\theta y_1+\sin^4\theta y_2\le \cos^4\theta z_1+\sin^4\theta z_2 +2\epsilon \le f_1(0)+3\epsilon.
    \end{equation*}
Since $\epsilon>0$ was arbitrary,
    \begin{equation*}
        \cos^4\theta y_1+\sin^4\theta y_2\le f_1(0).
    \end{equation*}

By similar arguments involving $f_2,f_3$, and $f_4$ one at a time, we obtain \eqref{eq:all theta at zero}

\end{proof}






Considering the conclusion of Lemma \ref{lemma:characterize H_1(0)} and the fact that $H_1(0)\subset H_0(0)$ trivially, the main question at hand is which $(y_1,y_2)$ satisfy \eqref{eq:all theta at zero}, that is,
\begin{align*}
    \cos^4\theta y_1+\sin^4\theta y_2&\le f_1(0)\\
    \sin^4\theta y_1-\cos^4\theta y_2&\le f_2(0)\\
    -\cos^4\theta y_1-\sin^4\theta y_2&\le f_3(0)\\
    -\sin^4\theta y_1+\cos^4\theta y_2&\le f_4(0)
\end{align*}
for all $\theta\in[0,\pi/2]$.

First consider the set
\begin{equation*}
    R_1:=\{y\in\R^2: \cos^4\theta y_1+\sin^4\theta y_2\le f_1(0) \text{ for all }\theta\in [0,\pi/2]\}.
\end{equation*}

Plugging in $\theta=0$ and $\theta=\pi/2$, we have $y_1\le f_1(0)$ and $y_2\le f_1(0)$ as defining constraints for $R_1$. Since $f_1(0)\ge0$ by \eqref{eq: OG system at origin} and $\sin^4\theta+\cos^4\theta\le 1$ for all $\theta$, these two inequalities imply all the rest and
\begin{equation}\label{eq:R1 final form}
    R_1=\{(y_1,y_2):y_1\le f_1(0),y_2\le f_1(0)\}
\end{equation}

Similarly,
\begin{align}
    R_2:&=\{y\in\R^2: \sin^4\theta y_1-\cos^4\theta y_2\le f_2(0) \text{ for all }\theta\in [0,\pi/2]\}\\
    &=\{(y_1,y_2):y_1\le f_2(0),y_2\ge -f_2(0)\}\label{eq:R2 final form}
\end{align}
and
\begin{align}
    R_4:&=\{y\in\R^2: -\sin^4\theta y_1+\cos^4\theta y_2\le f_4(0) \text{ for all }\theta\in [0,\pi/2]\}\\
    &=\{(y_1,y_2):y_1\ge -f_4(0),y_2\le f_4(0)\}\label{eq:R4 final form}
\end{align}

The region
\begin{equation*}
    R_3:=\{y\in\R^2: -\cos^4\theta y_1-\sin^4\theta y_2\le f_3(0) \text{ for all }\theta\in [0,\pi/2]\}
\end{equation*}
will not be described so easily since the value of $f_3(0)$ is allowed to be negative, that is, $R_3$ is not determined by $-\cos^4\theta y_1-\sin^4\theta y_2\le f_3(0)$ for just two choices of $\theta$.

So suppose here that $f_3(0)<0$.


Substituting $M=-f_3(0)>0$ and $a=\sin^2\theta$, we have
\begin{align*}
    R_3&=\{y\in\R^2: (1-a)^2y_1+a^2y_2\ge M \text{ for all }a\in[0,1]\}\\
    &= \{y\in\R^2: y_2\ge \frac{M-(1-a)^2y_1}{a^2} \text{ for all }a\in(0,1], y_1\ge M\}\\
    &=\bigcup_{y_1>M}\{(y_1,y_2):y_2\ge \frac{M-(1-a)^2y_1}{a^2} \text{ for all }a\in(0,1]\}.
\end{align*}

Given $y_1$, we find the largest value of $\frac{M-(1-a)^2y_1}{a^2}$ ranging over all $a\in(0,1]$; call this value $W(y_1)$. It follows from the above that
\begin{align*}
    R_3&=\bigcup_{y_1>M}\{(y_1,y_2):y_2\ge W(y_1)\}\\
    &=\{y\in R^2:y_1>M, y_2\ge W(y_1)\}.
\end{align*}

To compute $W(y_1)$, we define
\begin{equation*}
    V(y_1,a)=\frac{M-(1-a)^2y_1}{a^2}=\frac{M}{a^2}-\frac{y_1}{a^2}+\frac{2y_1}{a}-y_1
\end{equation*}
and find its maximum in $a$. By elementary calculus,
\begin{equation*}
    \frac{\partial V}{\partial a}=\frac{-2M}{a^3}+\frac{2y_1}{a^3}-\frac{2y_1}{a^2}
\end{equation*}
and
\begin{equation*}
    \frac{\partial^2 V}{(\partial a)^2}=\frac{6M}{a^4}-\frac{6y_1}{a^4}+\frac{4y_1}{a^3}.
\end{equation*}

Solving $\frac{\partial V}{\partial a}=0$ for $a$ gives $a=1-\frac{M}{y_1}$. By simple computation, $\frac{\partial^2 V}{(\partial a)^2}<0$ at $a=1-\frac{M}{y_1}$ so this is indeed a local maximum. As the only critical point, it is the global maximum. (In order for $a=0$ or $a=1$ to compete, there would need to be a local minimum between $a=1-\frac{M}{y_1}$ and $a=0$ or $a=1$, but we have already found all the critical points.)

We find
\begin{align*}
    W(y_1)&=V\left(y_1,1-\frac{M}{y_1}\right)\\
    &=\frac{M-(1-1-\frac{M}{y_1})^2y_1}{(1-\frac{M}{y_1})^2}\\
    &=\frac{My_1}{y_1-M}=M+\frac{M^2}{y_1-M}.
\end{align*}

We conclude that
\begin{equation}\label{eq:R3 final form}
    R_3=\{y\in R^2:y_1>M, y_2\ge M+\frac{M^2}{y_1-M}\}.
\end{equation}

In words, $R_3$ is the region contained in the upper-right quadrant of the plane with boundary given by a the upper-right component of a hyperbola with asymptotes $y_1=M$ and $y_2=M$.

Thus, we have established the following:
\begin{lemma}\label{lemma:H_1(0)}
Suppose $H_0(x)$ is nonempty for all $x\in E\setminus\{0\}$ and $f_3(0)<0$. Then,
\begin{equation}\label{eq:H_1 at origin}
    H_1(0)=\{y:y_1\ge -10^{-6}f_3(0)\}\cap (R_1\cap R_3)\cap (R_2\cap R_4),
\end{equation}
where $R_1,R_2,R_3,R_4$ are explicitly described in \eqref{eq:R1 final form}, \eqref{eq:R2 final form}, \eqref{eq:R3 final form}, and \eqref{eq:R4 final form}, respectively.
\end{lemma}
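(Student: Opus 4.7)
The plan is to observe that this lemma is essentially a bookkeeping combination of Lemma \ref{lemma:characterize H_1(0)} with the four explicit regional computations $R_1, R_2, R_3, R_4$ already carried out in the preceding discussion. No new analysis of the Glaeser refinement is required; the approach is just to verify that each piece fits.

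First I would check that the hypotheses of Lemma \ref{lemma:characterize H_1(0)} are satisfied. That lemma required $H_0(x)$ to be nonempty for all $x \in E$, whereas the present statement assumes this only for $x \in E \setminus \{0\}$. However, Lemma \ref{lemma:H_0} guarantees that $H_0(0)$ is always nonempty (independent of the $f_i$), so the hypothesis is automatically met. Therefore I can apply Lemma \ref{lemma:characterize H_1(0)} to write
\begin{equation*}
    H_1(0) \;=\; H_0(0)\,\cap\, S,
\end{equation*}
where $S$ is the set of $y \in \R^2$ satisfying the four inequalities
$\cos^4\theta\, y_1 + \sin^4\theta\, y_2 \le f_1(0)$,
$\sin^4\theta\, y_1 - \cos^4\theta\, y_2 \le f_2(0)$,
$-\cos^4\theta\, y_1 - \sin^4\theta\, y_2 \le f_3(0)$, and
$-\sin^4\theta\, y_1 + \cos^4\theta\, y_2 \le f_4(0)$
for every $\theta \in [0,\pi/2]$. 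By the very definitions of $R_1, R_2, R_3, R_4$, we have $S = R_1 \cap R_2 \cap R_3 \cap R_4$.

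Next I would identify $H_0(0)$ explicitly. By \eqref{eq: OG system at origin}, the only constraint placed on $F(0)$ at the origin is $-10^6 F_1(0) \le f_3(0)$; the inequalities $0 \le f_1(0), f_2(0), f_4(0)$ are constraints on the data $f$, not on the unknown $F$. Hence $H_0(0) = \{y \in \R^2 : y_1 \ge -10^{-6} f_3(0)\}$, and combining this with the previous paragraph yields the claimed formula. The hypothesis $f_3(0) < 0$ is used only to justify invoking the formula \eqref{eq:R3 final form} derived earlier for $R_3$ under that sign assumption; the formulas \eqref{eq:R1 final form}, \eqref{eq:R2 final form}, \eqref{eq:R4 final form} for the other three regions did not depend on the sign of the corresponding $f_i(0)$ in a problematic way (indeed $f_1(0), f_2(0), f_4(0) \ge 0$ by \eqref{eq: OG system at origin}).

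There is no real obstacle here: the entire content of the lemma is assembling the prior three computations. The only thing to be careful about is reading off $H_0(0)$ correctly from \eqref{eq: OG system at origin}, since that equation mixes conditions on $f$ with the single relevant condition on $F_1(0)$, and then verifying that the hypothesis of Lemma \ref{lemma:characterize H_1(0)} on all of $E$ is secured by combining the stated assumption on $E \setminus \{0\}$ with Lemma \ref{lemma:H_0} at the origin.
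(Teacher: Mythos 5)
Your proposal is correct and matches what the paper does: the paper presents Lemma \ref{lemma:H_1(0)} with no separate proof, prefacing it with ``Thus, we have established the following,'' because it is exactly the bookkeeping assembly you describe — apply Lemma \ref{lemma:characterize H_1(0)} (whose hypothesis on all of $E$ is secured by the assumed nonemptiness on $E\setminus\{0\}$ together with Lemma \ref{lemma:H_0} at the origin), identify the resulting $\theta$-uniform inequality set with $R_1\cap R_2\cap R_3\cap R_4$, read off $H_0(0)=\{y:y_1\ge -10^{-6}f_3(0)\}$ from \eqref{eq: OG system at origin} as in the proof of Lemma \ref{lemma:H_0}, and note that $f_3(0)<0$ is needed only to invoke \eqref{eq:R3 final form}.
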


Putting together Lemmas \ref{lemma:H_0} and \ref{lemma:H_1(0)}, Corollary \ref{cor:cor}, and Theorem \ref{thm:JLO} we have:
\begin{proposition}\label{prop:proven criteria}
Let $f_1,f_2,f_3,f_4\in C^\infty(\R^2,\R)$ such that $f_3(0)<0$. Then, \eqref{eq:standard form} has a $C^0$ solution if and only if
\begin{equation}\label{eq:proven criteria}
    -f_3(x)\le f_1(x) \text{ and } -f_4(x)\le f_2(x) \text{ for all } x\in E\setminus\{0\};
\end{equation}
and $H_1(0)$, as specified in \eqref{eq:H_1 at origin}, is nonempty.

\end{proposition}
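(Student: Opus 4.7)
The plan is to chain together the four results cited just before the proposition: Theorem \ref{thm:JLO}, Corollary \ref{cor:cor}, Lemma \ref{lemma:H_0}, and Lemma \ref{lemma:H_1(0)}. First I would verify the mild hypotheses needed to invoke Theorem \ref{thm:JLO}: the set $E=[0,1]^2$ is compact, and each $H_0(x)$ is the intersection of finitely many affine half-spaces in $\R^2$ (four of them off the origin, one at the origin), hence closed and convex. Theorem \ref{thm:JLO} then converts the question of existence of a $C^0$ solution $F$, i.e.\ a continuous selection from the bundle $(H_0(x))_{x\in E}$, into the question of whether the $l^*$-th iterated Glaeser refinement $H_{l^*}(x)$ is nonempty for every $x\in E$.

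Next I would apply Corollary \ref{cor:cor} to replace the latter by the conjunction that $H_0(x)$ is nonempty for every $x\in E\setminus\{0\}$ and that $H_1(0)$ is nonempty. The first half of this conjunction is, by Lemma \ref{lemma:H_0}, exactly \eqref{eq:basic off origin}, which is condition \eqref{eq:proven criteria} in the statement of the proposition. For the second half, under the standing assumption $f_3(0)<0$ together with nonemptiness of $H_0$ on $E\setminus\{0\}$, Lemma \ref{lemma:H_1(0)} identifies $H_1(0)$ explicitly as the set \eqref{eq:H_1 at origin} in terms of the regions $R_1,R_2,R_3,R_4$, so I can simply quote its description.

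The only small point to address is a consistency issue: Lemma \ref{lemma:H_1(0)} presupposes that $H_0$ is nonempty on $E\setminus\{0\}$ before producing the explicit formula for $H_1(0)$, so the formulation \eqref{eq:H_1 at origin} is only meaningful once \eqref{eq:proven criteria} holds. This is harmless for the biconditional: if \eqref{eq:proven criteria} fails, then both sides of the equivalence fail, because emptiness of $H_0$ at some $x\in E\setminus\{0\}$ immediately precludes any $C^0$ solution. I do not anticipate any conceptual obstacle, since all genuine work has already been carried out in Lemmas \ref{lemma:H_0} and \ref{lemma:H_1(0)}; the proof of the proposition is essentially bookkeeping, assembling the previously established equivalences in the correct order.
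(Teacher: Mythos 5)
Your proof is correct and matches the paper's approach exactly: the paper gives no separate argument for Proposition~\ref{prop:proven criteria}, stating only that it follows by ``putting together'' Lemmas~\ref{lemma:H_0} and \ref{lemma:H_1(0)}, Corollary~\ref{cor:cor}, and Theorem~\ref{thm:JLO}. Your explicit verification of the hypotheses of Theorem~\ref{thm:JLO} and your handling of the degenerate case where \eqref{eq:proven criteria} fails are sound and in fact a bit more careful than what the paper spells out.
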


We now restrict to the case where all the $f_i$ are constant. Define
\begin{equation*}
    K=\{f\in C^{\infty}(\R^2;\R^4): f_1,f_2,f_3,f_4 \text{ are constant}\}.
\end{equation*}

Furthermore, define
\begin{equation*}
    K_0=\{f\in K: \eqref{eq:standard form} \text{ has a solution}, f_3\le -.1\}.
\end{equation*}

By Proposition \ref{prop:proven criteria},
\begin{multline}\label{eq:Proven Criteria}
    K_0=\{f\in K: f_1,f_2,f_4\ge0, f_3\le-0.1, f_1+f_3\ge0, f_2+f_4\ge0\}\\
    \cap \{f\in K: \{y:y_1\ge -10^{-6}f_3\}\cap (R_1\cap R_3)\cap (R_2\cap R_4)\}\neq\emptyset
\end{multline}
The above can be made sense of through the fact that the $R_i$ depend on $f$ in their definitions.

Viewing $K$ as a four-dimensional Hilbert space, we claim $K_0$ is not a polytope. To see this, restrict further to the affine subspace where $f_3(0)=-1$ and $f_1(0)=3$. Thus,
\begin{equation*}
    R_1=\{(y_1,y_2):y_1\le 3,y_2\le 3\}
\end{equation*}
and
\begin{equation*}
    R_3=\{y\in \R^2:y_1>1, y_2\ge 1+\frac{1}{y_1-1}\}.
\end{equation*}

One may readily see that $\{y:y_1\ge -10^{-6}f_3(0)\}=\{y:y_1\ge 10^{-6}\}$ contains $R_1\cap R_3$ so this restriction is superfluous and we need only consider whether $\cap_i R_i$ is nonempty.

Since
\begin{equation*}
     R_2\cap R_4=\{(y_1,y_2):-f_4(0)\le y_1\le f_2(0),-f_2(0)\le y_2\le f_4(0)\}
\end{equation*}
and $f_2(0),f_4(0)\ge0$, the question becomes whether the upper right corner $(f_2(0),f_4(0))$ meets $R_1\cap R_3$. In the range of $1\le f_2(0),f_4(0)\le 3$, this is a nonlinear problem since $R_1\cap R_3$ has a curved boundary given by $y_2\ge 1+\frac{1}{y_1-1}$. Thus, $K_0$ may not be defined by a finite number of linear inequalities.

\begin{lemma}\label{lemma:not a polytope}
    The set $\tilde{K}$ of $f\in K$ such that \eqref{eq:standard form} has a $C^0$ solution may not be defined by finitely many linear inequalities.
\end{lemma}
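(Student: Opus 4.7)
The plan is a proof by contradiction, essentially repackaging the explicit computation already carried out in the paragraphs before the lemma. Suppose $\tilde{K} \subset K \cong \R^4$ is the solution set of finitely many affine inequalities $L_j(f_1, f_2, f_3, f_4) \le c_j$. I would then intersect with the $2$-dimensional affine slice $S = \{f_1 = 3,\ f_3 = -1\}$: substituting $f_1 = 3$ and $f_3 = -1$ in each $L_j$ yields finitely many affine inequalities in $(f_2, f_4)$, so $\tilde{K} \cap S$ is a (convex) polyhedron in the $(f_2, f_4)$-plane.

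Next I would pin down $\tilde{K} \cap S$ inside the open rectangle $U = (1.5, 3) \times (1, 3)$. Proposition~\ref{prop:proven criteria} applies because $f_3(0) = -1 < 0$, and reduces the existence of a $C^0$ solution to the sign conditions $f_2, f_4 \ge 0$ (automatic on $U$), together with $(R_1 \cap R_3) \cap (R_2 \cap R_4) \neq \varnothing$; the half-plane $\{y_1 \ge 10^{-6}\}$ is redundant because $R_1 \cap R_3 \subset \{y_1 > 1\}$, and the sign conditions $f_1 + f_3 \ge 0$ and $f_2 + f_4 \ge 0$ are immediate on $U$. The computation of $R_3$ carried out in the text, specialized to $M = -f_3(0) = 1$, then shows that on $U$ the intersection $(R_1 \cap R_3) \cap (R_2 \cap R_4)$ is nonempty exactly when the upper-right corner $(f_2, f_4)$ of the rectangle $R_2 \cap R_4$ lies above the hyperbola $y_2 = 1 + 1/(y_1 - 1)$; that is,
\[
\tilde{K} \cap S \cap U = \Bigl\{(f_2, f_4) \in U : f_4 \ge 1 + \tfrac{1}{f_2 - 1}\Bigr\}.
\]

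The conclusion is then immediate: the boundary of any planar polyhedron within an open set is a finite union of line segments, whereas the boundary of the set on the right-hand side inside $U$ is the strictly convex real-analytic arc $\{f_4 = 1 + 1/(f_2 - 1)\} \cap U$, which contains no line segment. This is the desired contradiction.

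The only step requiring any care is the intermediate equality on $U$: one must verify that no auxiliary linear constraint (the upper bound $y_2 \le 3$ from $R_1$, the lower bound $y_2 \ge -f_2$ from $R_2$, the bound $y_1 \ge -f_4$ from $R_4$, or the half-plane $y_1 \ge 10^{-6}$) becomes active in such a way as to replace part of the hyperbola with a straight segment. The choice $U = (1.5, 3) \times (1, 3)$ is calibrated precisely so that these auxiliary bounds are strictly inactive at the candidate point $(y_1, y_2) = (f_2, f_4)$, reducing the entire condition to the single hyperbolic inequality. Once this bookkeeping is dispatched, the remainder of the argument is purely formal.
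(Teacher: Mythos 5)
Your proof is correct and follows essentially the same route as the paper: restrict to the affine slice $\{f_1 = 3, f_3 = -1\}$, reduce the solvability condition on $(f_2,f_4)$ to the corner point $(f_2,f_4)$ lying above the hyperbola $y_2 = 1 + 1/(y_1-1)$, and conclude that a strictly convex boundary arc is incompatible with a polyhedral description. You are a bit more careful than the paper in pinning down an open window $U=(1.5,3)\times(1,3)$ in which the hyperbola is the only active constraint, but the underlying idea and the contradiction obtained are identical.
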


\begin{proof}
    Suppose $\tilde{K}$ may be defined by finitely many linear inequalities. Then $\tilde{K}\cap \{f:f_3\le -0.1\}$ may be defined by finitely many linear inequalities. However, $K_0=\tilde{K}\cap \{f:f_3\le -0.1\}$, contradicting our above reasoning.
\end{proof}

\section{Disproof of Conjecture}\label{sec:disproof}


So far, we have found nonlinear criteria on $f$ for the existence of a $C^0$ solution $F$ to the system \eqref{eq:standard form}. However, this does not automatically show that there do not exist linear criteria.



Suppose, for the sake of contradiction, that Conjecture \ref{conj:main conj} holds. That is, there exist linear partial differential operators $$L_{1,1},...,L_{1,\nu_1},...,L_{\mu_{\max},1},...,L_{\mu_{\max},\nu_{\mu_{\max}}},L'_{1,1},...,L'_{1,\nu'_1},...,L'_{\mu_{\max},1},...,L'_{\mu_{\max},\nu'_{\mu_{\max}}}$$ for which the following hold:
\begin{enumerate}
    \item Each $L_{\mu,\nu}$  acts on vectors $f=(f_1,...,f_N)\in C^\infty(\R^n,\R^N)$ and has the form
    \begin{equation*}
        L_{\mu,\nu} f(x)=\sum_{i=1}^N\sum_{|\alpha|\le \mb} a_{\mu\nu i\alpha}(x)\da f_i(x),
    \end{equation*}
    or
    \begin{equation*}
        L'_{\mu,\nu} f(x)=\sum_{i=1}^N\sum_{|\alpha|\le \mb} a'_{\mu\nu i\alpha}(x)\da f_i(x),
    \end{equation*}
    where the coefficients $a_{\mu\nu i\alpha},a'_{\mu\nu i\alpha}$ are semialgebraic and $\mb\ge m$.
    \item Let $f=(f_1,...,f_N)\in C^\infty(\R^n,\R^N)$. Then the system \eqref{intro1} admits a solution $F=(F_1,...,F_M)\in C^m(\R^n,\R^M)$ if and only if there exists $1\le \mu\le \mu_{\max}$ such that $L_{\mu,\nu} f\ge0$ on $\R^n$ for each $1\le \nu\le \nu_{\mu}$ and $L'_{\mu,\nu}f>0$ on $\R^n$ for each $1\le \nu\le \nu'_{\mu}$.
\end{enumerate}
We refer to the above as the ``Supposed Criteria."

For $f=(f_1,...,f_4)\in K$, there is a solution to our system if and only if there exists $1\le \mu\le \mu_{\max}$ such that
\begin{equation}\label{eq:pointwise inequalities}
        \sum_{i=1}^N a_{\mu\nu i}(x) f_i\ge0
\end{equation}
for all $x\in E$ and $1\le \nu\le \nu_{\mu}$ and 
\begin{equation}\label{eq:strict pointwise inequalities}
        \sum_{i=1}^N a'_{\mu\nu i}(x) f_i>0
\end{equation}
for all $x\in E$ and $1\le \nu\le \nu'_{\mu}$, where $a_{\mu\nu i}=a_{\mu\nu i0}$ and $a'_{\mu\nu i}=a'_{\mu\nu i0}$.

For $x\in E$, let
\begin{equation*}
    R_x=\{f\in K: \exists 1\le \mu\le \mu_{\max} \text{ such that } \eqref{eq:pointwise inequalities} \text{ and }\eqref{eq:strict pointwise inequalities}\text{ hold}\}.
\end{equation*}
By definition,
\begin{equation}\label{eq:K_0}
    \tilde{K}=K\cap (\cap_{x\in E}R_x)).
\end{equation}

The immediate concern here is that while each $R_x$ may be defined by a finite number of linear inequalities, the infinite intersection found in \eqref{eq:K_0} may give rise to a set which may not be defined by a finite number of linear inequalities. However, the following lemma demonstrates some redundancy in the inequalities defined in \eqref{eq:pointwise inequalities} and \eqref{eq:strict pointwise inequalities}.

\begin{lemma}\label{lemma:R_x}
For $x\in E\setminus\{0\}$,
\begin{equation*}
    R_x\supset \{f\in K:f_1+f_3\ge0,f_2+f_4\ge0\}.
\end{equation*}
\end{lemma}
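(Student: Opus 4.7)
The plan is to embed the constant $f$ into a smooth global datum $g \in C^\infty(\R^2, \R^4)$ for which \eqref{eq:standard form} admits a $C^0$ solution, while arranging $g$ to be flat to infinite order at $x^*$. Since every linear partial differential expression in the Supposed Criteria collapses at such a flat point to exactly the algebraic inequalities defining $R_{x^*}$, solvability with data $g$ will force $f \in R_{x^*}$ through the same $\mu$.

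To build $g$, fix a smooth bump function $\phi : \R^2 \to [0,1]$ equal to $1$ on a neighborhood of $x^*$ and supported in a small ball $U$ with $0 \notin U$, and take
\[
g(x) = \phi(x) f + (1 - \phi(x)) f^*, \qquad f^* := (1,1,1,1).
\]
Then $g$ is smooth, $g(x^*) = f$, and $\partial^\alpha g(x^*) = 0$ for all $|\alpha| \ge 1$ because $\phi$ is identically $1$ near $x^*$. For the solution, I would exploit that $f^* \in \tilde{K}$ via $F^* \equiv 0$. On $U$ define
\[
F_{\mathrm{loc}}(x) = B(x)^{-1}\bigl(\tfrac{f_1 - f_3}{2},\, \tfrac{f_2 - f_4}{2}\bigr),
\]
which is continuous on $U$ since $0 \notin U$; the hypothesis $f_1 + f_3 \ge 0$, $f_2 + f_4 \ge 0$ places these two scalars in $[-f_3, f_1]$ and $[-f_4, f_2]$ respectively, so by \eqref{eq:off-zero B form} we have $A(x) F_{\mathrm{loc}}(x) \le f$ componentwise on $U$. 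Setting $F(x) = \phi(x) F_{\mathrm{loc}}(x)$ on $U$ and $F(x) = 0$ elsewhere gives a continuous section (it vanishes near $\partial U$) satisfying $A(x) F(x) \le \phi(x) f \le g(x)$ at each $x \ne 0$, because $(1 - \phi(x)) f^*_i \ge 0$; the origin condition \eqref{eq: OG system at origin} for data $g(0) = f^*$ is met trivially by $F(0) = 0$.

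With $g$ globally $C^0$-solvable, the Supposed Criteria produce a $\mu_g$ with $L_{\mu_g, \nu} g \ge 0$ and $L'_{\mu_g, \nu} g > 0$ on $\R^2$ for every $\nu$. Evaluating these at $x = x^*$ and using $g(x^*) = f$ together with the vanishing of all higher derivatives of $g$ at $x^*$, each expression reduces to $\sum_i a_{\mu_g \nu i 0}(x^*) f_i \ge 0$ or $\sum_i a'_{\mu_g \nu i 0}(x^*) f_i > 0$, which is exactly the condition for $f \in R_{x^*}$ via $\mu = \mu_g$. The only step with any real content is verifying $A(x) F_{\mathrm{loc}}(x) \le f$ on $U$, and this is precisely where the hypotheses $f_1 + f_3 \ge 0$ and $f_2 + f_4 \ge 0$ are used; the remaining bump-function patching is standard.
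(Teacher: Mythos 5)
Your proposal is correct and follows essentially the same strategy as the paper's own proof: construct a smooth global datum $g$ that agrees with $f$ to infinite order near $x^*$ (so all higher-order terms in the Supposed Criteria drop out there) and that admits a global $C^0$ section, then evaluate the Supposed Criteria for $g$ at $x^*$ to conclude $f\in R_{x^*}$. The only differences are in how $g$ and the section are produced: you take the convex combination $g=\phi f + (1-\phi)(1,1,1,1)$ and write down an explicit section $F=\phi F_{\mathrm{loc}}$, whereas the paper takes $g=\theta f$ for a nonnegative bump $\theta$ (so $g\equiv 0$ near the origin) and defers solvability to Proposition~\ref{prop:proven criteria}. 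Your explicit construction is slightly more self-contained and, incidentally, sidesteps a small mismatch in the paper's citation: Proposition~\ref{prop:proven criteria} as stated assumes $f_3(0)<0$, but the paper's $g$ has $g_3(0)=0$, so the paper is really invoking the underlying Glaeser machinery (Lemmas~\ref{lemma:H_0}--\ref{lemma:characterize H_1(0)}, Corollary~\ref{cor:cor}, and Theorem~\ref{thm:JLO}) rather than the boxed proposition; your argument avoids that appeal entirely. All the verifications in your sketch check out: $B(x)F_{\mathrm{loc}}(x)=\bigl(\tfrac{f_1-f_3}{2},\tfrac{f_2-f_4}{2}\bigr)$ lies in $[-f_3,f_1]\times[-f_4,f_2]$ exactly when $f_1+f_3\ge0$ and $f_2+f_4\ge0$, multiplying by $\phi\ge0$ preserves this against $\phi f$, and $\phi f\le g$ componentwise since each entry of $f^*$ is nonnegative.
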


In other words, the Supposed Criteria applied away from the origin may be no stricter on the set of constant functions than Proposition \ref{prop:proven criteria}.

\begin{proof}
    Let $x\in E\setminus\{0\}$ and suppose for the sake of contradiction that there exists $f\in K$ satisfying $f_1+f_3\ge0,f_2+f_4\ge0$ yet not lying in $R_x$.

    Choose $0<r<|x|$ and $\theta\in C_0^\infty(\R^2;\R)$ such that $\theta\ge0$, $\theta\equiv1$ on $B_{r/2}(x)$, and the support of $\theta$ is contained in $B_r(x)$. The zero function ($f_1\equiv...\equiv f_4\equiv0$) is trivially a solution to \eqref{eq:standard form}. By Proposition \ref{prop:proven criteria}, $\theta(f_1,f_2,f_3,f_4)$ is a $C^0$ solution to \eqref{eq:standard form}, as mutliplcation by nonegative scalars preserves \eqref{eq:proven criteria} and the computation of $H_1(0)$ is the same as for the zero function due to the truncated support of $\theta$.
    
    However, $\theta(f_1,f_2,f_3,f_4)$ does not satisfy the Supposed Criteria, at least at the point $x$. This is a contradiction.
\end{proof}

\begin{corollary}\label{cor:almost there}
    \begin{equation*}
        \tilde{K}=K\cap \{f\in K:f_1+f_3\ge0,f_2+f_4\ge0\}\cap R_{0}.
    \end{equation*}
\end{corollary}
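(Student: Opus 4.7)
The plan is to chain together \eqref{eq:K_0} and Lemma~\ref{lemma:R_x}, with the two inclusions argued separately and each following almost directly from material already established.

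For the $(\supset)$ inclusion, suppose $f \in K$ satisfies $f_1+f_3\ge 0$, $f_2+f_4\ge 0$, and $f\in R_0$. By Lemma~\ref{lemma:R_x}, $f\in R_x$ for every $x\in E\setminus\{0\}$, and by hypothesis $f\in R_0$, so $f\in \bigcap_{x\in E}R_x$. Applying \eqref{eq:K_0} gives $f\in\tilde K$.

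For the $(\subset)$ inclusion, let $f\in\tilde K$. From \eqref{eq:K_0} we already have $f\in R_0$, so only the two scalar inequalities $f_1+f_3\ge 0$ and $f_2+f_4\ge 0$ require justification. Because $f\in\tilde K$ means the constant system \eqref{eq:standard form} has a $C^0$ solution, Proposition~\ref{prop:proven criteria} (or, more directly, Lemma~\ref{lemma:H_0} applied at any single $x\in E\setminus\{0\}$) supplies exactly these two inequalities. Here we quietly use that the hypothesis $f_3(0)<-0.1$ in Proposition~\ref{prop:proven criteria} is not needed: the conditions $-f_3\le f_1$ and $-f_4\le f_2$ off the origin come directly from Lemma~\ref{lemma:H_0}, which has no sign restriction on $f_3(0)$.

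There is essentially no obstacle beyond bookkeeping; the real content was packaged into Lemma~\ref{lemma:R_x} (which absorbs the point evaluations away from the origin into the two simple linear inequalities) and into \eqref{eq:K_0} (which expresses $\tilde K$ as an intersection of the $R_x$'s). The corollary is the formal statement that the ``away from origin'' part of $\bigcap_{x\in E}R_x$ collapses, so that all remaining complexity of the Supposed Criteria is concentrated in $R_0$ — this is precisely the setup needed for the final contradiction in the next section, where the one-point constraint $R_0$ will have to reproduce the curved boundary identified in Lemma~\ref{lemma:not a polytope}.
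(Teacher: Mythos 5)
Your proof is correct and takes essentially the same route as the paper's: chain \eqref{eq:K_0} with Lemma~\ref{lemma:R_x} to collapse the off-origin $R_x$ constraints, and invoke the Proven Criteria for the two inequalities $f_1+f_3\ge0$, $f_2+f_4\ge0$. Your aside that these inequalities really follow from Lemma~\ref{lemma:H_0} alone, without any sign hypothesis on $f_3(0)$, is a useful clarification that the paper glosses over (though note the hypothesis in Proposition~\ref{prop:proven criteria} is $f_3(0)<0$, not $f_3(0)<-0.1$; the $-0.1$ threshold only enters in the definition of $K_0$).
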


\begin{proof}
    By the Proven Criteria,
    \begin{equation*}
        \tilde{K}\subset \{f\in K:f_1+f_3\ge0,f_2+f_4\ge0\},
    \end{equation*}
    so by \eqref{eq:K_0},
    \begin{equation*}
        \tilde{K}=K\cap \{f\in K:f_1+f_3\ge0,f_2+f_4\ge0\}\cap (\cap_{x\in E} R_x)).
    \end{equation*}
    
    Thus, by Lemma \ref{lemma:R_x},
    \begin{equation*}
        \tilde{K}=K\cap \{f\in K:f_1+f_3\ge0,f_2+f_4\ge0\}\cap R_{0}.
    \end{equation*}
\end{proof}

By Corollary \ref{cor:almost there}, $\tilde{K}$ may be defined via finitely many linear inequalities. However, by Lemma \ref{lemma:not a polytope} this is a contradiction. Therefore, we must reject the assumption that the Supposed Criteria exist and conclude that the set of $C^0$ solutions to our system of equations may not be characterized by a finite set of partial differential inequalities. This concludes the proof of our counterexample to Conjecture \ref{conj:main conj}.

The extension to the case $n>2$ is trivial as one may consider $\tilde{f}_i(x_1,...,x_n)=f_i(x_1,x_2)$ in place of $f_i(x_1,x_2)$ in the example and repeat the above analysis. Any $C^0$ solution $F(x_1,x_2)$ from the fully analyzed case extends naturally to a solution $C^0$ solution $\tilde{F}(x_1,...,x_n)=F(x_1,x_2)$. Similarly, any $C^0$ solution to the $n>2$ case of the form $\tilde{F}(x_1,...,x_n)$ automatically restricts to a $C^0$ solution $F(x_1,x_2)=\tilde{F}(x_1,x_2,0,...,0)$.

\bibliographystyle{plain}
\bibliography{bib}

\begin{thebibliography}{10}

\bibitem{bcm}
Edward Bierstone, Jean-Baptiste Campesato, and Pierre~D. Milman.
\newblock {$C^m$} solutions of semialgebraic or definable equations.
\newblock {\em Adv. Math.}, 385:Paper No. 107777, 24, 2021.

\bibitem{BMP03}
Edward Bierstone, Pierre~D. Milman, and Wiesław Pawłucki.
\newblock Differentiable functions defined in closed sets. {A} problem of
  {W}hitney.
\newblock {\em Invent. Math.}, 151(2):329--352, 2003.

\bibitem{Brenner}
Holger Brenner.
\newblock {Continuous solutions to algebraic forcing equations}.
\newblock {\em arXiv:math/0608611v2}, august 2006.

\bibitem{EH18}
Neil Epstein and Melvin Hochster.
\newblock Continuous closure, axes closure, and natural closure.
\newblock {\em Trans. Amer. Math. Soc.}, 370(5):3315--3362, 2018.

\bibitem{Fefferman2006Solution}
Charles Fefferman.
\newblock Whitney's extension problem for {$C^m$}.
\newblock {\em Ann. of Math. (2)}, 164(1):313--359, 2006.

\bibitem{Feff-Kollar}
Charles Fefferman and J\'anos Koll\'ar.
\newblock Continuous solutions of linear equations.
\newblock In {\em From {F}ourier analysis and number theory to {R}adon
  transforms and geometry}, volume~28 of {\em Dev. Math.}, pages 233--282.
  Springer, New York, 2013.

\bibitem{FL14}
Charles Fefferman and Garving~K. Luli.
\newblock The {B}renner-{H}ochster-{K}ollár and {W}hitney problems for
  vector-valued functions and jets.
\newblock {\em Rev. Mat. Iberoam.}, 30(3):875--892, 2014.

\bibitem{Luli-JMSJ}
Charles Fefferman and Garving~K Luli.
\newblock {$C^m$ semialgebraic sections over the plane}.
\newblock {\em J. Math. Soc. Japan}, 2021.

\bibitem{LULI-2}
Charles Fefferman and Garving~K. Luli.
\newblock Generators for the {$C^m$}-closures of ideals.
\newblock {\em Rev. Mat. Iberoam.}, 37(3):965--1006, 2021.

\bibitem{LULI-3}
Charles Fefferman and Garving~K. Luli.
\newblock Solutions to a system of equations for {$C^m$} functions.
\newblock {\em Rev. Mat. Iberoam.}, 37(3):911--963, 2021.

\bibitem{FShv18}
Charles Fefferman and Pavel Shvartsman.
\newblock Sharp finiteness principles for {L}ipschitz selections.
\newblock {\em Geom. Funct. Anal.}, 28:1641--1705, 2018.

\bibitem{G58}
Georges Glaeser.
\newblock Étude de quelques algèbres tayloriennes.
\newblock {\em J. Analyse Math.}, 6:1--124, 1958.

\bibitem{Luli-IMRN}
Fushuai Jiang, Garving~K Luli, and Kevin O'Neill.
\newblock {On the shape fields finiteness principle}.
\newblock {\em International Mathematics Research Notices}, 09 2021.

\bibitem{JIANG2022108566}
Fushuai Jiang, Garving~K. Luli, and Kevin O'Neill.
\newblock Smooth selection for infinite sets.
\newblock {\em Advances in Mathematics}, 407:108566, 2022.

\bibitem{Michael}
Ernest Michael.
\newblock Continuous selections. {I}.
\newblock {\em Ann. of Math. (2)}, 63:361--382, 1956.

\end{thebibliography}

\end{document}